\newcommand{\dia}[1]{{\langle #1 \rangle}}
\newcommand{\ttmat}[4]{\left( \begin{array}{cc}
#1 & #2 \\
#3 & #4
\end{array}
\right)}
\newcommand{\sm}[4]{\ensuremath{\big(\begin{smallmatrix}#1 & #2 \\ #3 & #4\end{smallmatrix}\big)}}
\newcommand{\up}[1]{^{[#1]}}
\newcommand{\isoto}{\xrightarrow{\sim}}
\newcommand{\onto}{\twoheadrightarrow}
\newcommand{\Tr}{\mathrm{Tr}}
\newcommand{\cC}{\mathcal{C}}
\newcommand{\ad}{\mathrm{ad}}
\newcommand{\red}{\mathrm{red}}
\newcommand{\F}{\mathbb{F}}
\newcommand{\m}{\mathfrak{m}}
\newcommand{\sO}{\mathcal{O}}
\newcommand{\p}{\mathfrak{p}}
\newcommand{\Q}{\mathbb{Q}}
\newcommand{\T}{\mathbb{T}}
\newcommand{\Z}{\mathbb{Z}}
\DeclareMathOperator{\Ann}{Ann}
\DeclareMathOperator{\End}{End}
\DeclareMathOperator{\Frob}{Frob}
\DeclareMathOperator{\Gal}{Gal}
\DeclareMathOperator{\GL}{GL}
\DeclareMathOperator{\Hom}{Hom}
\DeclareMathOperator{\Ind}{Ind}
\DeclareMathOperator{\tr}{tr}
\newcommand{\bT}{\mathbb{T}}
\newcommand{\lb}{\llbracket}
\newcommand{\rb}{\rrbracket}
\newtheorem{theorem}{Theorem}[section]
\newtheorem{corollary}[theorem]{Corollary}
\newtheorem{proposition}[theorem]{Proposition}
\newtheorem{lemma}[theorem]{Lemma}
\theoremstyle{definition}
\newtheorem{definition}[theorem]{Definition}
\newtheorem{example}[theorem]{Example}
\theoremstyle{remark}
\newtheorem{remark}[theorem]{Remark}
\title{The Eisenstein ideal at prime-square level has  constant rank}
\author{Jaclyn Lang}
\author{Preston Wake}
\begin{document}

\maketitle

\begin{abstract}
Let $N$ and $p$ be prime numbers with $p \ge 5$ such that $p \mid \mid (N+1)$.  
In a previous paper, we showed that there is a cuspform $f$ of weight $2$ and level $\Gamma_0(N^2)$ whose $\ell$-th Fourier coefficient is congruent to $\ell + 1$ modulo a prime above $p$ for all primes $\ell$.  In this paper, we prove that this form $f$ is unique up to Galois conjugacy, and the extension of $\Z_p$ generated by the coefficients of $f$ is exactly $\Z_p[\zeta_p+\zeta_p^{-1}]$.  We also prove similar results when a higher power of $p$ divides $N+1$.
\end{abstract}

\section{Introduction}

Let $p$ be a prime number and let $\bar{\rho}: \Gal(\overline{\Q}/\Q) \to \GL_2(\bar\F_p)$ be a modular residual Galois representation.  How many different Hecke eigenforms $f$ give rise to $\bar{\rho}$, and what can be said about the $p$-adic field $\Q_p(f)$ generated by the Hecke eigenvalues of $f$?  One can fine tune this question by constraining the various parameters involved. For instance, if one fixes the level of $f$ but allows the weight to vary, then Buzzard, motivated by conjectures about slopes of modular forms, asked whether the degrees $[\Q_p(f):\Q_p]$ are bounded \cite[Question 4.4]{buzzard}. In the case $p=2$ with level $1$, Buzzard even suggested a bound of $2$ on $[\Q_2(f):\Q_2]$.  We know of very little progress on this question in the twenty years since Buzzard asked it\footnote{While preparing this article, we learned of recent work by Kimball Martin and Anna Medvedovsky giving examples of level-one $f$ where $[\Q_2(f):\Q_2]>2$.  There does not seem to be a consensus among experts about the question of boundedness.}.

In this paper, we consider a question orthogonal to Buzzard's: we fix the representation $\bar\rho=\omega\oplus 1$, where $\omega: G_\Q \to \F_p^\times$ is the mod-$p$ cyclotomic character.  We are interested in Hecke eigenforms of fixed weight $2$ that give rise to $\bar{\rho}$, but we allow the level to vary (with certain constraints).  There is another way to phrase this in terms of Hecke algebras: there is a localization $\bT$ of the Hecke algebra whose minimal prime ideals correspond to Galois-conjugacy classes of such eigenforms, and we are interested in the number and degree of these minimal primes.

If the level is constrained to be prime, Mazur asked about the rank of the Hecke algebra $\T$ \cite[p.~140]{Mazur77}\footnote{Calegari and Emerton \cite{CE05} first pointed out the parallels between Mazur's question and Buzzard's.}.  In this prime-level case, the degrees of $\Q_p(f)$ (and their sum --- the rank of $\bT$) have a great deal of arithmetic significance.  They have been studied using modular symbols by Merel \cite{Merel96} and Lecouturier \cite{Lecouturier21},  where they are shown to be related to special values of equivariant $L$-functions.  Using Galois representations, Calegari--Emerton \cite{CE05} and Wake--Wang-Erickson \cite{WWE20} show that these ranks are related to class groups and Massey products in Galois cohomology.  In numerical examples, the most common scenario is that there is a unique cusp form giving rise to $\bar{\rho}$ and its Hecke field is $\Q_p$, but this is certainly not always the case.  See the tables in \cite[p.~40]{Mazur77} and \cite[\S~1.6]{WWE20} for data about the rank and irreducible components of $\T$ in small prime level.  It is not known whether the degrees of the $\Q_p(f)$ are bounded independently of the level $N$. Heuristics given in \cite{WWE20} suggest that, given a prime $N$ with $p \mid\mid(N-1)$, the probability that there is a form $f$ of level $N$ such that $[\Q_p(f):\Q_p]=d$ is $\frac{p-1}{p^d}$. This accounts for the numerical evidence that the degrees $[\Q_p(f): \Q_p]$ are usually small but suggests that they are unbounded.

In this paper, we consider the same representation $\bar\rho=\omega\oplus 1$ and same weight $2$, but we vary the level over \emph{squares} of primes $N$ such that $p | (N+1)$.  For such primes $N$, Mazur's results imply that there are no newforms $f$ of level $\Gamma_0(N)$ giving rise to $\bar\rho$.  However, in our previous work, we show that there is a newform of level $\Gamma_0(N^2)$ with residual representation $\bar\rho$ \cite[Theorem B]{us}.  Our main result in this paper is that in this case we can compute the Hecke algebra $\T$ explicitly.  More precisely we have the following theorem. 

\begin{theorem}
\label{thm:main}
Let $N, p \ge 5$ be prime numbers such that $N \equiv -1 \bmod p$, and let $r \ge 1$ be the $p$-adic valuation of $N+1$.  Let $\T$ be the Hecke algebra parametrizing modular forms of level $\Gamma_0(N^2)$ and weight $2$ with mod-$p$ residual representation $\omega \oplus 1$. Let $\Delta = \F_{N^2}^{\times, p\mathrm{-part}}$, a cyclic group of order $p^r$,  let $\Lambda=\Z_p[\Delta]$, and let $\Lambda^+ \subset \Lambda$ be the subring fixed under the involution given by inversion on $\Delta$.  Then there is a canonical isomorphism of $\Z_p$-algebras $\Lambda^+ \isoto \bT$ sending the augmentation ideal of $\Lambda^+$ to the Eisenstein ideal of~$\bT$.
\end{theorem}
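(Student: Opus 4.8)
\emph{Setting up the map.} The plan is to make $\Lambda^+$ and $\bT$ act faithfully on a single module of modular forms and compare them. Let $M$ be a suitable module of weight-$2$ forms of level $\Gamma_0(N^2)$ on which $\bT$ acts faithfully --- for instance the $p$-adic completion of the cuspforms together with the relevant Eisenstein series, localized at the maximal ideal attached to $\omega \oplus 1$, or, after Jacquet--Langlands, its transfer to the definite quaternion algebra $D$ ramified exactly at $N$ and $\infty$. The key point is that $M$ carries a canonical action of $\Delta$ commuting with the Hecke operators, coming from the local behaviour at $N$: every cuspform contributing to $M$ is supercuspidal at $N$, with $\pi_N$ attached by local Langlands to a tamely ramified character of $\Q_{N^2}^\times$ of $p$-power order, i.e.\ to a character of $\Delta$; in the quaternionic picture this is just the action of $\mathcal O_D^\times/(1+\mathfrak p_D) \cong \F_{N^2}^\times$ on the space with level $1+\mathfrak p_D$ at $N$. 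Since $N \equiv -1 \bmod p^r$, the Galois involution $\Frob_N$ acts on $\Delta$ by inversion, and because $\Ind_{\Q_{N^2}}^{\Q_N}\psi \cong \Ind_{\Q_{N^2}}^{\Q_N}\psi^{\Frob_N}$ the resulting operators factor through the subring $\Lambda^+ \subset \Lambda = \Z_p[\Delta]$. One checks that these operators lie in $\bT$ (they are values of the pseudorepresentation carried by $\bT$ at an inertia element at $N$), producing a canonical homomorphism $\Lambda^+ \to \bT$ which carries the augmentation ideal into the Eisenstein ideal because the Eisenstein series is $\Delta$-fixed.

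\emph{Surjectivity and the rank bound.} Next I would prove $\Lambda^+ \to \bT$ is surjective. Writing $\delta$ for a generator of $\Delta$ and $t = \delta + \delta^{-1}$, the Dickson/Chebyshev relations give $\Lambda^+ = \Z_p[t]$, so it suffices to show that $\bT$ is generated over $\Z_p$ by the image of $t$ --- equivalently, that $\mathfrak m_{\bT}/\mathfrak m_{\bT}^2$ is one-dimensional over $\F_p$ and that this dimension is detected by $t - 2$, i.e.\ is accounted for by ramification at $N$. Since $\omega \oplus 1$ is reducible, I would carry out this tangent-space computation in the pseudodeformation framework of \cite{WWE20}: the tangent space is a Selmer group for $\ad \bar\rho$ with local conditions (unramified outside $Np$, finite flat at $p$, and ``induced from $\Q_{N^2}$'' at $N$), and its pieces are controlled by the $p$-units and the $p$-part of the class group of $\Q(\zeta_p)$ and $\Q(\zeta_p)^+$. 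The mechanism is that the flat condition at $p$ contributes nothing to the Selmer group, while the condition at $N$ contributes exactly one dimension --- the class of a cyclotomic extension ramified only at $N$ --- so, using $p \ge 5$ and $p \nmid N-1$, the tangent space is one-dimensional and concentrated at $N$. Hence $\Lambda^+ \onto \bT$, and in particular $\mathrm{rank}_{\Z_p}\bT \le \mathrm{rank}_{\Z_p}\Lambda^+ = \tfrac{p^r+1}{2}$, uniformly in $N$; this uniform cap --- which fails at prime level, where the local condition at $N$ is weaker --- is the source of the ``constant rank'' phenomenon.

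\emph{Injectivity and conclusion.} It remains to show $\mathrm{rank}_{\Z_p}\bT \ge \tfrac{p^r+1}{2}$. The Eisenstein series gives the quotient $\bT \onto \Z_p$ attached to one component of $\Lambda^+$, of rank $1$; for the remaining rank $\tfrac{p^r-1}{2}$ I would exhibit enough cuspforms, namely Galois-conjugacy classes of newforms whose type at $N$ is the supercuspidal attached to a character of $\Delta$ of each order $p^i$ ($1 \le i \le r$), contributing the real-cyclotomic factors $\Z_p[\zeta_{p^i}+\zeta_{p^i}^{-1}]$; the top case $i = r$ (and all of $r = 1$) is \cite[Theorem B]{us}, and the other orders follow by the same method or are extracted from the $\Lambda$-module structure of $M$. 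A surjection of finite free $\Z_p$-modules of equal rank is an isomorphism, so $\Lambda^+ \isoto \bT$. Finally, the augmentation ideal of $\Lambda^+$ and the Eisenstein ideal of $\bT$ are both principal, generated by $t - 2$ and by its image, and are the kernels of the two maps to $\Z_p$ matched by the isomorphism, so the augmentation ideal is sent to the Eisenstein ideal. I expect the tangent-space computation to be the main obstacle: one must control the relevant Selmer group precisely enough to force exactly one dimension, uniformly in $N$, which is where the $N^2$-new hypothesis and $p \mid (N+1)$ are essential; the rank lower bound is a refinement of the existence results of \cite{us}.
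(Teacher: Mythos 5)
Your first two steps are broadly parallel to the paper's argument, just dressed differently: the paper constructs the map Galois-theoretically (an inertial pseudodeformation ring $R_N$ at $N$ is computed to be $\Lambda^+$ and shown, via Nakayama, to surject onto the global pseudodeformation ring $R$ and hence onto $\bT$), and surjectivity is exactly your "one-dimensional tangent space accounted for by ramification at $N$". Two cautions there: for the reducible $\bar\rho=\omega\oplus 1$ the tangent space is \emph{not} simply a Selmer group for $\ad\bar\rho$ --- one needs the pseudodeformation machinery (Bella\"iche's exact sequence, the reducibility ideal, cyclicity of the GMA modules $B$ and $C$ coming from one-dimensionality of $H^1_{\mathfrak{c}}(G_{\Q,Np},\F_p(\pm1))$ as in \cite{WWE20}) --- and you cannot impose ``induced from $\Q_{N^2}$'' as a local condition when bounding the tangent space of $\bT$, since that is part of what is being proved; what is actually needed is that the full (finite-flat, fixed-determinant) tangent space is one-dimensional and that its generator is ramified at $N$ (\cref{lem:nothing at level N}), so that the image of $t-2$ generates $\m_\bT/(p,\m_\bT^2)$.

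The genuine gap is your lower bound $\mathrm{rank}_{\Z_p}\bT\ge\frac{p^r+1}{2}$. Theorem B of \cite{us} only produces \emph{some} cuspform congruent to the Eisenstein series (equivalently, $\mathrm{rank}_{\Z_p}\bT\ge 2$); it does not identify its coefficient ring as containing $\Z_p[\zeta_{p^r}+\zeta_{p^r}^{-1}]$, nor the order of its supercuspidal character at $N$ --- that identification is precisely the content of the theorem and its corollary, so "the top case is \cite{us}" is circular, and "the other orders follow by the same method or are extracted from the $\Lambda$-module structure of $M$" is an assertion, not an argument (faithfulness of the $\Delta$-action on the Eisenstein-localized module is equivalent to what you must prove). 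The paper closes the argument by a different, quantitative route: apply the Wiles--Lenstra numerical criterion to $\Lambda^+\onto\bT\onto\Z_p$. One computes $\#J/J^2=p^r$ from the presentation $\Lambda^+\cong\Z_p[x]/(x\Psi(x))$ with $v_p(\Psi(0))=r$, and one shows $p^r$ divides the congruence number $\eta$ because the Eisenstein series is congruent to a cuspform modulo $p^r$ \cite[Corollary 2.6]{us}; the criterion then forces all the surjections to be isomorphisms (alternatively, since $\Lambda^+$ is monogenic, $\bT\cong\Z_p[x]/(xF(x))$ with $F\mid\Psi$ and $p^r\mid F(0)$ forces $F=\Psi$). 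You need one of these inputs --- or an honest proof of your claimed refinement --- to finish; without it the equal-rank step has no support.
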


Since minimal prime ideals of $\bT$ correspond to (Galois-conjugacy classes of) eigenforms, the theorem allows us to answer all of the questions asked at the start of this introduction: the fields $\Q_p(f)$ correspond to the fraction fields of minimal primes of $\Lambda^+$.  The situation is particularly simple when $p \mid\mid (N+1)$ (i.e.~ when $r=1$), in which case there is only one minimal prime of $\Lambda^+$ other than the augmentation ideal, and this prime has residue ring $\Z_p[\zeta_p+\zeta_p^{-1}]$, so the theorem yields the following.

\begin{corollary}
\label{cor:main}
Let $N, p \ge 5$ be prime numbers such that $p \mid\mid (N+1)$. Then there is a cuspidal eigenform $f$ of level $\Gamma_0(N^2)$ and weight $2$ with coefficients in $\Z_p[\zeta_p+\zeta_p^{-1}]$ such that
\begin{equation}
\label{eq:eis con}
a_\ell(f) \equiv 1 + \ell \pmod{\p}
\end{equation}
for all prime numbers $\ell$, where $\p$ is the maximal ideal of $\Z_p[\zeta_p+\zeta_p^{-1}]$.  Moreover, this form is the unique (up to Galois-conjugacy) cuspform satisfying \eqref{eq:eis con}.
\end{corollary}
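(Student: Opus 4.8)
The plan is to deduce \Cref{cor:main} from \Cref{thm:main} by computing the ring $\Lambda^+$ explicitly in the case $r=1$; existence of a cuspform $f$ satisfying \eqref{eq:eis con} is already \cite[Theorem B]{us}, so the new content is to identify its coefficient ring and prove uniqueness. First I would note that $\Delta$ has order exactly $p$: since $N \equiv -1 \bmod p$ and $p \ge 5$, we have $N-1 \equiv -2 \not\equiv 0 \bmod p$, so $v_p(N^2-1) = v_p(N+1) = 1$ and $\Delta$ is cyclic of order $p$. Fixing a generator $\delta$, one has $\Lambda = \Z_p[\Delta] \cong \Z_p[X]/(X^p-1)$ and $\Lambda^+ = \Z_p[\delta + \delta^{-1}]$, free over $\Z_p$ of rank $(p+1)/2$ (the sums $\delta^k + \delta^{-k}$ being $\Z_p$-polynomials in $\delta + \delta^{-1}$).

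Next I would work out the minimal primes of $\Lambda^+$. Since $\Z_p[\Delta]$ is $\Z_p$-flat and $\Z_p[\Delta] \otimes_{\Z_p} \Q_p = \Q_p[\Delta] \cong \Q_p \times \Q_p(\zeta_p)$ is reduced, both $\Lambda$ and $\Lambda^+$ are reduced; as the inversion involution acts trivially on the first factor and as $\zeta_p \mapsto \zeta_p^{-1}$ on the second, $\Lambda^+ \otimes_{\Z_p} \Q_p \cong \Q_p \times \Q_p(\zeta_p + \zeta_p^{-1})$ has exactly two factors, so $\Lambda^+$ has exactly two minimal primes. One is the augmentation ideal $\mathfrak{a} = (\delta + \delta^{-1} - 2)$, with $\Lambda^+/\mathfrak{a} \cong \Z_p$. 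The other, call it $\mathfrak{q}$, is the kernel of the composite $\Lambda^+ \hookrightarrow \Lambda \onto \Lambda/(\Phi_p(\delta)) \cong \Z_p[\zeta_p]$ sending $\delta \mapsto \zeta_p$; since this carries $\delta + \delta^{-1}$ to $\zeta_p + \zeta_p^{-1}$ and $\Lambda^+$ is generated over $\Z_p$ by $\delta + \delta^{-1}$, its image is precisely $\Z_p[\zeta_p + \zeta_p^{-1}]$, whence $\Lambda^+/\mathfrak{q} \cong \Z_p[\zeta_p + \zeta_p^{-1}]$ (which is in fact the ring of integers of $\Q_p(\zeta_p + \zeta_p^{-1})$, $\zeta_p + \zeta_p^{-1} - 2$ being a uniformizer).

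Then I would transport this across the isomorphism $\Lambda^+ \isoto \bT$ of \Cref{thm:main}, under which $\mathfrak{a}$ becomes the Eisenstein ideal. Thus $\bT$ is reduced with exactly two minimal primes: the Eisenstein ideal, with $\bT/I_{\mathrm{Eis}} \cong \Z_p$ realizing the Eisenstein eigensystem $T_\ell \mapsto 1+\ell$, and $\mathfrak{q}$, with $\bT/\mathfrak{q} \cong \Z_p[\zeta_p + \zeta_p^{-1}]$. The cuspform $f$ of \cite[Theorem B]{us} has residual representation $\omega \oplus 1$, hence defines a $\Z_p$-algebra homomorphism $\varphi_f \colon \bT \to \bar{\Q}_p$, $T_\ell \mapsto a_\ell(f)$, with image the order generated by the $a_\ell(f)$ and with $\ker\varphi_f$ one of the two minimal primes. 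It is not the Eisenstein ideal: a cuspform whose eigensystem were that of a (normalized) Eisenstein series $E$ would have the same $q$-expansion as $E$ up to a constant, forcing $f = E$, which is impossible. Hence $\ker\varphi_f = \mathfrak{q}$ and the coefficients of $f$ generate $\Z_p[\zeta_p + \zeta_p^{-1}]$. For uniqueness, any cuspform $g$ of level $\Gamma_0(N^2)$ and weight $2$ satisfying \eqref{eq:eis con} has $\bar{\rho}_g^{\semis} = \omega \oplus 1$ by Chebotarev and Brauer--Nesbitt, so it too defines a non-Eisenstein minimal prime of $\bT$, necessarily $\mathfrak{q}$; thus $g$ and a suitable Galois conjugate of $f$ have the same system of Hecke eigenvalues, and since both are necessarily newforms --- there being no cuspforms of level $\Gamma_0(N)$ with residual representation $\omega \oplus 1$ by Mazur \cite{Mazur77} --- strong multiplicity one identifies $g$ with a Galois conjugate of $f$.

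The steps above are all routine once \Cref{thm:main} is granted; the only point requiring genuine input beyond formal algebra is the fact that a cuspform cannot carry the Hecke eigensystem of an Eisenstein series, which is what forces $\ker\varphi_f = \mathfrak{q}$ rather than the Eisenstein ideal. I would view this --- together with the bookkeeping identifying the relevant forms as newforms so that multiplicity one applies --- as the only real subtlety in the argument.
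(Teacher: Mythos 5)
Your proposal is correct and follows essentially the same route as the paper, which deduces the corollary directly from \cref{thm:main} by observing that for $r=1$ the ring $\Lambda^+$ has exactly two minimal primes, the augmentation ideal and one further prime with residue ring $\Z_p[\zeta_p+\zeta_p^{-1}]$, so that minimal primes of $\bT$ give the Eisenstein series and a single Galois-conjugacy class of cuspforms. Your additional bookkeeping (that a cuspform cannot carry the Eisenstein eigensystem, and that the relevant forms are new so eigenvalues determine the form) just fills in details the paper leaves implicit.
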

Note that this implies that the fields $\Q_p(f)$ are independent of $N$.  Contrast this with the prime-level case, where the heuristic suggests that the degrees $[\Q_p(f) : \Q]$ are unbounded as one varies over primes $N$ with $p \mid\mid (N-1)$.

Of course, there is a variant of \cref{cor:main} when $r>1$: in that case,  there are $r$ Galois-conjugacy classes, and the coefficient rings are $\Z_p[\zeta_{p^i}+\zeta_{p^i}^{-1}]$ for $i=1,\dots,r$.

\subsection{Outline of the paper}
The proof of \cref{thm:main} uses Galois deformation theory, and takes up most of the paper.  We sketch the proof here,  indicating in which sections the steps take place. Let $R$ be the pseudodeformation ring of $\overline{\rho}$ parametrizing deformations that have fixed determinant and that are unramified outside $Np$ and finite-flat at $p$.  (The theory of pseudodeformations is reviewed in \cref{sec:pseudodef}.)   As usual,  there is a surjection $R \onto \T$.  We define the `pseudo-minimal' quotient $R^\mathrm{pseudo-min}$ of $R$ corresponding to deformations whose trace equals the trace of the trivial representation on inertia-at-$N$.  In other words, $R^\mathrm{pseudo-min}$ parameterizes representations for which the \emph{semisimplification} of the restriction to inertia-at-$N$ is trivial. This includes representations that are unramified at $N$, but also representations that are Steinberg at $N$.  However, Mazur's results imply that there are no cuspforms of level $\Gamma_0(N)$ 
that are congruent to the Eisenstein series, so one would expect that there are no representations that are Steinberg at $N$.  In  \cref{sec:reduce to local}, we prove that this is true: $R^\mathrm{pseudo-min} = \Z_p$. This key result shows that $R$ is entirely determined by the local behavior at $N$.
In \cref{sec:local}, we define a local-at-$N$ pseudodeformation ring $R_N$, and prove that all local deformations come from inducing a character of $G_{\Q_{N^2}}$,  which gives an isomorphism $R_N \cong \Lambda^+$.  Together with $R^\mathrm{pseudo-min} =\Z_p$, this gives surjections $\Lambda^+ \onto R \onto \bT$.  To complete the proof, in \cref{sec:proof} we show that these surjections are isomorphisms using Wiles's numerical criterion,  applying our previous results \cite{us} to understand the congruence number.  Finally,  in \cref{sec:massey}, we indicate how our results are related to the Massey-products method of \cite{WWE20}.

\subsection{Acknowledgements}
We thank Shaunak Deo, Robert Pollack, Alice Pozzi, and Carl Wang-Erickson for helpful conversations.  J.L.~was supported by NSF grant DMS-2301738 and P.W.~was supported by NSF CAREER grant DMS-2337830.

\section{Pseudodeformations}
\label{sec:pseudodef}
In this section, we review the aspects of deformation theory of pseudorepresentations that we will need in the next section. There are no new results in this section; it is a digest of material from many sources, including \cite{BC2009,Chenevier,BellaichePseudo,CarlThesis,WWE4}  . 

\subsection{Pseudorepresentations}
The concept of a pseudorepresentation came about to codify the formal properties of the trace (or, more generally, the characteristic polynomial) of a representation.  The first definition of pseudorepresentation was made by Wiles \cite{wiles} for 2-dimensional representations and was later generalized by Taylor \cite{Taylor}, Rouquier \cite{rouquier}, and Chenevier \cite{Chenevier}. We will use Chenevier's version, which he calls ``determinants''.

Chenevier's notion of pseudorepresentation mimics the properties of the determinant.  For a commutative ring $A$, the determinant map
$\det: M_n(A) \to A$
has many well-known properties: it is multiplicative,  in that $\det(xy)=\det(x)\det(y)$ and $\det(1)=1$, and has degree $n$, in that $\det(ax)=a^n\det(x)$ for $a \in A$ and $x,y \in M_n(A)$.  It is also a polynomial function in the entries of the matrix.  In particular, if $B$ is a commutative $A$-algebra, then one can also apply $\det$ to an element of the tensor product $M_n(A)\otimes_A B$ and obtain, in a natural way, an element of $B$.  In particular, taking $B=A[t]$ allows one to define the characteristic polynomial $\det(t-x) \in A[t]$ of a matrix $x$.

\begin{definition}
Let $A$ be a commutative ring and $E$ an $A$-algebra. A \emph{pseudorepresention of $E$ of degree $d$}, written $D:E \to A$, is a collection of maps $D_B: E \otimes_A B \to B$, one for each commutative $A$-algebra $B$, that are natural in $B$ and satisfy:
\begin{itemize}
\item $D_B(xy)=D_B(x)D_B(y)$ and $D_B(1)=1$,
\item $D_B(bx)=b^dD_B(x)$
\end{itemize}
for all $x,y \in E\otimes_A B$ and all $b \in B$. The map $D_A$ is abbreviated to $D$.

If $G$ is a group, then a \emph{pseudorepresentation of $G$ of degree $d$ over $A$}, written $D: G \to A$, is a pseudorepresentation of $A[G]$. 
\end{definition}

In this paper, we will be interested exclusively in degree-two pseudorepresentations and only in the case where $2$ is invertible in the ring $A$.  In this case, a pseudorepresentations have simpler description \cite[Example 1.8]{Chenevier}, as we now recall. For a $2$-dimensional pseudorepresentation $D:E \to A$, define the trace $\Tr_D:E \to A$ of $D$ by the formula
\[
\Tr_D(x)=D(x+1)-D(x)-1.
\]
It is a simple calculation to see that the familiar formula for the characteristic polynomial of a $2\times 2$-matrix holds:
\[
D_{A[t]}(t-x)=t^2-\Tr_D(x)t+D(x).
\]
For a degree-two pseudorepresention $D: G \to A$ of $G$,  the trace satisfies relations 
\begin{enumerate}
\item $\Tr_D(xy)=\Tr_D(yx)$ and $\Tr_D(1)=2$, and
\item
\label{item: pseudorep ident}
 $D(x)\Tr_D(x^{-1}y)-\Tr_D(x)\Tr_D(y)+\Tr_D(xy)=0$
\end{enumerate}
for all $x,y \in G$.
Conversely, if $D':G \to A^\times$ is a homomorphism and $T:G \to A$ is a function such that the pair $(D',T)$ satisfy (1) and (2), then the formula
\[
D_B(bx+y) = D'(x)b^2+(T(x)T(y)-T(xy))b+D'(y)
\] 
for $x,y \in G$ and $b\in B$,
defines a pseudorepresentation $D:G \to A$. In this way, one can think of a pseudorepresentation as the data of the functions $D$ and $\Tr_D$.  
Moreover, if $2$ is invertible in $A$, then one can recover $D_A$ from $\Tr_D$ using formula (2) as $D(x)=\frac{\Tr_D(x)^2-\Tr_D(x^2)}{2}$. 
\begin{remark}
\label{rem:continuity}
Thus far in the discussion, we have considered discrete groups and rings.  For topological groups and rings,  one considers \emph{continuous} pseudorepresentations $D:G \to A$, which amounts to requiring that the functions $D: G \to A$ and $\Tr_D: G \to A$ are continuous (see \cite[Section 2.30]{Chenevier}). 
If $D:G \to A$ is a continuous pseudorepresentation and $H \subseteq G$ is a dense subgroup, then $D$ is determined by its restriction to $H$ \cite[Example 2.31]{Chenevier}.  To simplify the discussion below, we will always assume that pseudorepresentations are continuous if we are using topological groups.
\end{remark}

\begin{example}
\label{ex:pseudorep from rep}
Let $G$ be a group and $A$ be a commutative ring. If $\rho:G \to \GL_2(A)$ is a homomorphism, then the pair of functions $(D,T)=(\det \circ \rho, \tr \circ \rho)$ is, of course,  a pseudorepresentation.  Moreover, if there is a subring $A' \subseteq A$ such that $D$ and $T$ both have images in $A'$, then $(D,T)$ defines a pseudorepresentation $D: G \to A'$. 
\end{example}

This example can be seen as one of the major advantages of pseudorepresentations, and is the purpose for which Wiles first used them. Note that it may not be true that there is a conjugate $\rho'$ of $\rho$ such that $\rho'$ has values in $\GL_2(A')$, and, even if such a $\rho'$ exists, it may not be unique up to  $\GL_2(A')$-conjugation. 

\subsection{Cayley--Hamilton algebras and generalized matrix algebras} Not every pseudorepresentation $D:G \to A$ comes from a true representation $\rho: A[G]\to M_2(A)$ as in \cref{ex:pseudorep from rep}.  However,  every pseudorepresentation does come from a representation $A[G] \to E$ for a special type of algebra $E$ called a \emph{Cayley--Hamilton algebra}.  In good situations, this Cayley--Hamilton algebra can be given the extra structure of a \emph{generalized matrix algebra}, which have many useful properties in common with usual matrix algebras.

\begin{definition} Let $A$ be a commutative ring and let $E$ be an $A$-algebra.
A degree-two pseudorepresentation $D: E \to A$ is called \emph{Cayley--Hamilton} if
\[
x^2-\Tr_D(x)x+D_B(x) =0
\]
for all $x \in E \otimes_A B$.  A pair $(E,D)$ of an $A$-algebra $E$ and a Cayley--Hamilton pseudorepresentation is called a \emph{Cayley--Hamilton algebra}.

If $G$ is a group, then a \emph{Cayley--Hamilton representation} of $G$ is a triple $(E,D,\rho)$ where $(E,D)$ is a Cayley--Hamilton algebra and $\rho: G \to E^\times$ is a group homomorphism. The composition $D \circ \rho : G \to A$ defines a pseudorepresentation $\psi(\rho)$ of $G$ over $A$ called \emph{the associated pseudorepresentation}.
\end{definition}

For example, the algebra $E=M_2(A)$ with the pseudorepresentation given by the determinant is Cayley--Hamilton, by the Cayley--Hamilton Theorem (whence the name).  A representation $\rho: A[G] \to M_2(A)$ gives rise to a Cayley--Hamilton representation, just as in \cref{ex:pseudorep from rep}.

\begin{definition}
Let $A$ be a commutative ring and let $E$ be an $A$-algebra that is finitely generated as an $A$-module. A (2-dimensional) \emph{generalized matrix algebra structure} on $E$ is the data of
\begin{itemize}
\item an idempotent element $e \in E$,
\item $A$-algebra isomorphisms $\phi:eEe \cong A$ and $\phi':e'Ee' \cong A$,  where $e'=1-e$,
\end{itemize}
such that the function $\tr: E \to A$ defined by
\[
\tr(x)=\phi(exe)+\phi'(e'xe')
\]
satisfies $\tr(xy)=\tr(yx)$ for all $x,y \in E$.

An $A$-algebra $E$ together with an generalized matrix algebra structure is called an $A$-GMA.
\end{definition}

An example of an $A$-GMA is the matrix algebra $E=M_2(A)$ with $e=\sm{1}{0}{0}{0}$ and the obvious isomorphisms $eM_2(A)e \cong A$ and $e'M_2(A)e' \cong A$.  In general, an $A$-GMA can be written in the form
\[
E = \ttmat{A}{B}{C}{A},
\]
where $B=eEe'$ and $C=e'Ee$ are sub-$A$-modules of $E$.  The multiplication can be written as
\begin{equation}
\label{eq:GMA mult}
\ttmat{a}{b}{c}{d} \ttmat{a'}{b'}{c'}{d'} = \ttmat{aa'+m(b,c')}{ab'+bd'}{ca'+dc'}{dd'+m(c,b')},
\end{equation}
where $m: B \times C \to A$ is the map $m(exe',e'ye)=\phi(exe'ye)$.  

Conversely, if $B$ and $C$ are two finitely generated $A$-modules and $m: B \otimes_A C \to A$ is an $A$-linear map satisfying certain properties, then defining $E:= \ttmat{A}{B}{C}{A}$ with the multiplication as in \eqref{eq:GMA mult} defines an $A$-GMA (see \cite[Section 1.3]{BC2009} and \cite[Example 3.1.7]{WWE4} for more precise statements).

\begin{example}
\label{ex:1-red}
If $A=k[x]/(x^n)$ for a field $k$, then there is a GMA $E$ given by
\[
E= \ttmat{A}{xA}{xA}{A},
\]
where $m: xA \times xA \to A$ is $m(ax,bx)=abx$.
\end{example}

\subsection{Pseudodeformation rings}
Let $G$ be a group and $\F$ be a finite field of characteristic $p$ and let $\bar D:G \to \F$ be a pseudorepresentation.  In this section, we discuss deformations of $\bar D$.  We assume that $G$ is profinite and satisfies Mazur's finiteness condition: for every open normal subgroup $H \subseteq G$, there are only finitely many continuous group homomorphisms $H \to \Z/p\Z$. For instance, $G$ could be the absolute Galois group of a local field or the Galois group of the maximal extension of a number field that is unramified outside a finite set.

Let $\cC$ be the category of complete local Noetherian $W(\F)$-algebras $(A,\m_A)$ with residue field $\F$.  For an object $A$ in $\cC$, a \emph{deformation} of $\bar D$ to $A$ is a pseudorepresentation $D: G \to A$ such that $D\otimes_A \F = \bar D$. 
The set-valued functor on $\cC$ sending $A$ to the set of deformations of $\bar D$ to $A$ is representable by a ring $(R_{\bar D},\m_{\bar D})$ in $\cC$ \cite[Proposition E]{Chenevier}. The resulting pseudorepresentation $D^u:G \to R_{\bar D}$ is called the universal pseudodeformation.

A Cayley--Hamilton representation $(E,D,\rho)$ of $G$ is said to have residual representation $\bar D$ if the associated pseudorepresentation $\psi(\rho)$ of $G$ is a deformation of $\bar D$. The collection of Cayley--Hamilton representations with residual representation $\bar D$ forms a category in a natural way, and this category has a universal object $(E_{\bar D},D^u_{E_{\bar E}},\rho^u)$, which is a Cayley--Hamilton algebra over $R_{\bar D}$ and whose associated pseudorepresentation is the universal pseudodeformation \cite[Proposition 3.6]{CarlThesis}.

Now assume that $\bar D = \chi_1 \oplus \chi_2$ for two distinct characters $\chi_1, \chi_2: G \to \F^\times$.  In this case, there is a natural generalized matrix algebra structure on $E_{\bar D}$, written as
\begin{equation}
\label{eq:EDbar GMA}
E_{\bar D} = \ttmat{R_{\bar D}}{B_{\bar D}}{C_{\bar D}}{R_{\bar D}}
\end{equation}
with the property that, if $\rho^u:G \to E_{\bar D}$ is written as $\rho^u(g)=\sm{a(g)}{b(g)}{c(g)}{d(g)}$, then $a(g) \equiv \chi_1(g) \pmod{\m_{R_{\bar D}}}$.  See \cite[Lemma 1.4.3]{BC2009} and \cite[Theorem 2.22]{Chenevier} for more details.

\subsection{Tangent spaces}
The (equicharacteristic) tangent space to a deformation functor is the set of first-order deformations (that is, deformations with values in the dual numbers); this set is naturally a vector space over the residue field. For a representation $\bar\rho: G \to \GL_2(\F)$,  this means looking at deformations $\rho: G \to \GL_2(\F[\epsilon]/(\epsilon^2))$. It is well-known that, in this case, the tangent space can be identified with the group cohomology $H^1(G,\ad(\bar\rho))$ (see 
\cite[Proposition 1, pg.~284]{MazurDef}, for instance).  The identification sends a cocycle $\phi \in Z^1(G,\ad(\bar\rho))$ to the deformation
\[
\rho_\phi =  (1+\phi \epsilon)\bar\rho: G \to \GL_2(\F[\epsilon]/(\epsilon^2)).
\]

The computation of the tangent space of a pseudodeformation ring is similar to this but is complicated by the fact that not all of these deformations alter the pseudorepresentation. For instance, if $\bar\rho=\sm{\chi_1}{0}{0}{\chi_2}$ for distinct characters $\chi_1$ and $\chi_2$, then there is an isomorphism of $G$-modules $\ad(\bar\rho)\cong \sm{\F}{\F(\chi_1\chi_2^{-1})}{\F(\chi_1^{-1}\chi_2)}{\F}$.  If $\phi \in Z^1(G, \sm{\F}{0}{0}{\F})$, then the deformation $\rho_\phi$ amounts to deforming the two characters $\chi_1$ and $\chi_2$ separately and does change the pseudorepresentation.  However, if $b \in Z^1(G, \F(\chi_1\chi_2^{-1})) \subset Z^1(G,\ad(\bar\rho))$, then $\rho_b= \sm{\chi_1}{\chi_2 b \epsilon}{0}{\chi_2}.$
This is a nontrivial deformation of $\bar\rho$, but, since the trace and determinant are unchanged, it is a trivial pseudodeformation.

To get a nontrivial pseudodeformation out of cocycles $b \in Z^1(G, \F(\chi_1\chi_2^{-1}))$ and $c \in  Z^1(G, \F(\chi_1^{-1}\chi_2))$ one has to assume more. Namely, if the cup product $b \cup c$ vanishes in $H^2(G,\F)$, then there is a cochain $\phi:G \to \F$ such that $d \phi = b\cup c$. There is also a cochain $\phi':G \to \F$ such that $d\phi'=c \cup b$, namely $\phi'=bc-\phi$. If, in addition, there is a cochain $b_1:G \to \F$ such that $db_1=b \cup \phi + \phi' \cup b$, then one can define a representation using this data by
\begin{equation}
\label{eq:rho bc tangent}
\rho_{b,c,\phi} =\ttmat{\chi_1+\chi_1\phi \epsilon}{\chi_2 (b+b_1 \epsilon)}{\chi_1 c \epsilon}{\chi_2+\chi_2\phi'\epsilon}.
\end{equation}
Note that this is \emph{not} a deformation of $\bar\rho$ as a representation,  since its residual representation is $\sm{\chi_1}{\chi_2b}{0}{\chi_2}$, but it is a \emph{pseudodeformation}.  Let
\begin{equation}
\label{eq:bc tangent}
D_{b,c,\phi}=\Tr(\rho_{b,c,\phi})=\chi_1+\chi_2+\epsilon(\chi_1\phi+\chi_2\phi')
\end{equation}
be the associated pseudorepresentation, and note that it involves $\phi$, $b$, and $c$, but not $b_1$. In fact, one can prove that $D_{b,c,\phi}$ defines a pseudodeformation even without assuming the existence of the cochain $b_1$ (this can be proven using the GMA of \cref{ex:1-red}).

An exact description of the tangent space of a pseudodeformation ring has been worked out beautifully by Bella\"iche in \cite{BellaichePseudo} and generalized by Wang-Erickson in \cite[Section 3.3]{CarlAinf}.  Let $\bar D: G \to \F$ be $\bar D= \chi_1 \oplus \chi_2$ for distinct characters $\chi_1$ and $\chi_2$. Let $\m_{\bar D}$ be the maximal ideal of $R_{\bar D}$ and let $\mathfrak{t}_{\bar D} = \Hom_\F(\m_{\bar D}/(p,\m_{\bar D}^2), \F)$ be the tangent space.  By \cite[Theorem A]{BellaichePseudo}, there is an exact sequence
\begin{equation}
\label{eq:tangent}
0 \to H^1(G,\F) \oplus H^1(G,\F) \to \mathfrak{t}_{\bar D} \to H^1(G, \chi_1\chi_2^{-1}) \otimes_\F H^1(G,\chi_1^{-1}\chi_2) \xrightarrow{\cup}  H^2(G,\F) \oplus H^2(G,\F).
\end{equation}
The subspace $H^1(G,\F) \oplus H^1(G,\F)$ corresponds to the \emph{reducible deformations} that deform $\chi_1$ and $\chi_2$ separately. For $b \in H^1(G, \F(\chi_1\chi_2^{-1}))$ and $c \in  H^1(G, \F(\chi_1^{-1}\chi_2))$ such that $b \cup c=0$, the corresponding element of $\mathfrak{t}_{\bar D}$ is exactly \eqref{eq:bc tangent}.

\subsection{Reducibility ideal}  
We now return to the situation of \eqref{eq:EDbar GMA}, so $\bar D = \chi_1 \oplus \chi_2$ for distinct characters $\chi_1, \chi_2: G \to \F^\times$.  We say that a deformation $D$ of $\bar D$ is \emph{reducible} if $D=\tilde{\chi_1} \oplus \tilde{\chi_2}$ for deformations $\tilde{\chi_i}$ of $\chi_i$.  The reducible deformations define a subfunctor of the pseudodeformation functor that is represented by a quotient $R_{\bar D}^\red$ of $R_{\bar D}$.  The kernel of the map is called the \emph{ideal of reducibility} $J_{\bar D} =\ker(R_{\bar D} \to R_{\bar D}^\red)$. 

The ring $R_{\bar D}^\red$ is fairly easy to understand: it can be identified with the tensor product of deformation rings of the characters $\chi_i$ (see \cite[Proposition 4.6.2]{WWE4}). The ideal of reducibility is related to the GMA-structure on $E_{\bar D}$ by a theorem of Bella\"iche and Chenevier: $J_{\bar D}$ is the image of the map $B_{\bar D} \otimes_{R_{\bar D}} C_{\bar D} \to R_{\bar D}$ defined by the GMA-structure \eqref{eq:EDbar GMA} (see \cite[Section 1.5.1]{BC2009}).  In particular, there is a surjective map
\begin{equation}
\label{eq:BC=J}
B_{\bar D} \otimes_{R_{\bar D}} C_{\bar D}  \onto J_{\bar D}.
\end{equation}
Moreover, certain quotients of the modules $B_{\bar D}$ and $C_{\bar D}$ can be understood using group cohomology.  Let  $R_{\bar D} \to A$ be a morphism in $\cC$, and let $\chi_{1,A}, \chi_{2,A} : G \to A^\times$ be the corresponding deformations of $\chi_1$ and $\chi_2$. Then there is an isomorphism
\begin{equation}
\label{eq:B dual}
\Hom_A(B_{\bar D} \otimes_{R_{\bar D}} A, A) \cong H^1(G,\chi_{1,A}\chi_{2,A}^{-1})
\end{equation}
by \cite[Theorem 1.5.6]{BC2009} and a similar isomorphism for $C_{\bar D}$ with the roles of $\chi_{1,A}$ and $\chi_{2,A}$ reversed. 
 
Taken together, these results can give a fairly clear picture of the structure of $R_{\bar D}$, especially when the cohomology groups $H^1(G,\chi_{1,A}\chi_{2,A}^{-1})$ and $H^1(G,\chi_{1,A}^{-1}\chi_{2,A})$ are small.

\begin{example}
Suppose that $H^1(G,\chi_{1}\chi_{2}^{-1})$ and $H^1(G,\chi_{1}^{-1}\chi_{2})$ are both $1$-dimensional $\F$-vector spaces. Then \eqref{eq:B dual} and Nakayama's lemma imply that $B_{\bar D}$ and $C_{\bar D}$ are both cyclic $R_{\bar D}$-modules. By \eqref{eq:BC=J}, this implies that $J_{\bar D}$ is a principal ideal.  
\end{example}

To see how this compares to the tangent space sequence \eqref{eq:tangent}, consider the reduction of \eqref{eq:BC=J} modulo $\m_{\bar D}$:
\[
B_{\bar D}/\m_{\bar D} B_{\bar D} \otimes_\F C_{\bar D}/\m_{\bar D} C_{\bar D} \onto J_{\bar D}/\m_{\bar D} J_{\bar D} \onto J_{\bar D}/(p,\m_{\bar D}^2) \subseteq \m_{\bar D} /(p,\m_{\bar D}^2).
\]
Taking the $\F$-dual of this composite map and using \eqref{eq:B dual} gives a map
\[
\mathfrak{t}_{\bar D} \to H^1(G, \chi_1\chi_2^{-1}) \otimes_\F H^1(G,\chi_1^{-1}\chi_2)
\]
that equals the map in \eqref{eq:tangent}.
\subsection{Deformation conditions}
For applications to number theory, often one wants to consider deformations that satisfy certain conditions rather than the universal deformations considered thus far.  For instance,  one often wants to understand Galois representations that ``come from geometry", a condition can is usually expressed in terms of ramification and $p$-adic Hodge theory.  For deformations of representations, Ramakrishna worked out a theory for deformations with conditions \cite{Ramakrishna}, and this theory has been generalized to pseudodeformations \cite{WWE4}.

A \emph{deformation condition} on representations of a group $G$ is a full subcategory $\mathfrak{c}$ of the category of finite $\Z_p[G]$-modules that is closed under isomorphisms, submodules, quotient modules, and finite direct sums.  We think of this as a condition on modules, so we say that a module ``has $\mathfrak{c}$" if it is in $\mathfrak{c}$.  By definition, a pseudorepresention $D:G \to A$ of $G$ with values in a finite ring $A$ in $\cC$ \emph{has $\mathfrak{c}$} if there is a Cayley--Hamilton representation $(E,D_E,\rho)$ over $A$ such that the $\Z_p[G]$-module $E$ has $\mathfrak{c}$ and such that $D=D_E\circ \rho$.  A general ring $A$ in $\cC$ is a limit of finite rings, so the definition is extended to $A$ by taking limits.

With this definition,  the constructions and properties carried out in this section extend to pseudorepresentations with $\mathfrak{c}$. In particular, there are quotients $R_{\bar D, \mathfrak{c}}$ and $E_{\bar D, \mathfrak{c}}$ of $R_{\bar D}$ and $E_{\bar D}$ that parameterize deformations and having property $\mathfrak{c}$ \cite[Section 2.5]{WWE4}.  Moreover,  the analogs of \eqref{eq:BC=J} and \eqref{eq:B dual} hold with the $\mathfrak{c}$-versions, except that, in \eqref{eq:B dual}, the group cohomology $H^1(G,\chi_{1,A}\chi_{2,A}^{-1})$ needs to be replaced by the group $H^1_{\mathfrak{c}}(G,\chi_{1,A}\chi_{2,A}^{-1})$ of extensions 
\[
0 \to \chi_{1,A} \to \mathcal{E} \to \chi_{2,A} \to 0
\]
where $\mathcal{E}$ has $\mathfrak{c}$ \cite[Section 4.3]{WWE4}.  (This group $H^1_{\mathfrak{c}}$ is a natural generalization of the Bloch--Kato cohomology groups $H^1_e$, $H^1_f$, and $H^1_g$ \cite[Section 3, pg.~352]{BlochKato}.)

\section{Reduction to a local problem}
\label{sec:reduce to local}
In this section, we prove some important reductions toward the proof of \cref{thm:main}.  First we define the Hecke algebra $\bT$ and the pseudodeformation ring $R$ that are relevant to the problem and prove that there is a surjection $R \onto \bT$.  Then we analyze the tangent space of $R$ and use this to prove the key result: the pseudo-minimal quotient $R^\mathrm{pseudo-min}$ of $R$ is equal to $\Z_p$. 

\subsection{The Hecke algebra}
Let $M_2(\Gamma_0(N^2))$ denote the space of modular forms of weight $2$ and level $\Gamma_0(N^2)$ with integral coefficients, and let $\tilde{\bT}$ be the subring of $\End_\Z(M_2(\Gamma_0(N^2))$ generated by the Hecke operators $T_n$ for all $n$.  Let $I$, called the Eisenstein ideal, be the $\T$-ideal generated by $T_N$ and $T_\ell-\ell-1$ for all $\ell  \nmid N$.  Let $\m_{\bar\rho}$ be the maximal ideal generated by $I$ and $p$, and let $\bT$ be the completion of $\tilde{\bT}$ at $\m_{\bar\rho}$.  Finally, $\bT^0$ denotes the maximal quotient of $\bT$ that acts faithfully on the space of cusp forms, which is denoted $S_2(\Gamma_0(N^2))$.  By \cite[Theorem 2.8]{us}, the quotient $M_2(\Gamma_0(N^2))_{\m_{\bar\rho}}/S_2(\Gamma_0(N^2))_{\m_{\bar\rho}}$ is generated by a single Eisenstein series $E$, which is an eigenform for all $T_n$ whose $T_N$-eigenvalue is $0$.

\subsection{The pseudodeformation ring}
\label{subsec:Computation of R}
Let $G_{\Q,Np}$ be the Galois group of the maximal extension of $\Q$ that is unramified outside $\infty$, $N$, and $p$.  Fix embeddings of $\overline{\Q}$ into $\overline{\Q}_p$ and $\overline{\Q}_N$, and let $G_N, G_p \subset G_{\Q,Np}$ be the corresponding decomposition groups at $N$ and $p$.  Let $I_N \subset G_N$ and $I_p \subset G_p$ be their respective inertia groups. Let $\bar D: G_{\Q,Np} \to \F_p$ be the pseudorepresention $\omega \oplus 1$.  Let $\mathfrak{c}$ be the ``finite-flat'' condition; that is, $\mathfrak{c}$ is the category of finite $\Z_p[G_{\Q,Np}]$-modules $M$ such that there is a finite-flat group scheme $\mathcal{G}$ over $\Z_p$ such that $M \cong \mathcal{G}(\bar\Q_p)$ as $G_p$-modules. This is a deformation condition by \cite[Section 2]{Ramakrishna}. Let $R$ be the quotient of $R_{\bar D, \mathfrak{c}}$ parameterizing deformations that have determinant equal to the $p$-adic cyclotomic character, which we denote by $\epsilon$.  That is, $R$ is the quotient by the ideal generated by $D^u(\sigma)-\epsilon(\sigma)$ for all $\sigma \in G_{\Q,Np}$.   Abusing notation slightly, let $D^u:G \to R$ denote the composition of the universal deformation with $R_{\bar D} \onto R$.

\begin{lemma}
\label{lem:R to T}
There is a surjective $\Z_p$-algebra homomorphism $R \onto \bT$.
\end{lemma}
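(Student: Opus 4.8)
The plan is to produce the pseudorepresentation attached to the Hecke algebra $\bT$ and check that it is a deformation of $\bar D$ satisfying all the conditions built into $R$; the universal property of $R$ then yields the map $R \to \bT$, and surjectivity follows because the image contains enough Hecke operators to generate $\bT$.

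First I would recall the standard construction of the Galois pseudorepresentation over $\bT$. For each newform (or, after localizing, each Hecke eigensystem occurring in $S_2(\Gamma_0(N^2))_{\m_{\bar\rho}}$) there is the usual Deligne--Serre Galois representation, unramified outside $Np$, with $\tr\Frob_\ell = T_\ell$ and $\det\Frob_\ell = \ell$ for $\ell \nmid Np$. Assembling these, or equivalently invoking the existence of a Galois pseudorepresentation over the full cuspidal Hecke algebra (e.g.\ as in the references to Chenevier or in \cite{WWE4}), one gets a continuous pseudorepresentation $D_{\bT^0} : G_{\Q,Np} \to \bT^0$ with $\Tr_{D_{\bT^0}}(\Frob_\ell) = T_\ell$ and $D_{\bT^0}(\Frob_\ell) = \ell$ for all $\ell \nmid Np$. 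Composing with the trace of the Eisenstein series $E$ (whose $\ell$-th coefficient is $1+\ell$ for $\ell\nmid Np$ and whose $T_N$-eigenvalue is $0$) and using that $\bT$ injects into $\bT^0 \times \Z_p$, one upgrades this to a pseudorepresentation $D_\bT : G_{\Q,Np} \to \bT$. Reducing modulo $\m_{\bar\rho}$, the trace of $\Frob_\ell$ becomes $1 + \ell \bmod p = \Tr(\omega\oplus 1)(\Frob_\ell)$, and since the $\Frob_\ell$ are dense in $G_{\Q,Np}$ (Chebotarev) and pseudorepresentations are determined by their restriction to a dense subgroup (\cref{rem:continuity}), $D_\bT \otimes_\bT \F_p = \bar D$. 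Thus $D_\bT$ is a deformation of $\bar D$ to $\bT$.

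Next I would verify that $D_\bT$ satisfies the two conditions defining $R$ as a quotient of $R_{\bar D}$: it must have determinant $\epsilon$ and be finite-flat at $p$. For the determinant: $D_\bT(\Frob_\ell) = \ell = \epsilon(\Frob_\ell)$ for all $\ell \nmid Np$, and again density plus continuity forces $D_\bT = \epsilon$ as a function on all of $G_{\Q,Np}$ (both are continuous homomorphisms-on-$D'$ agreeing on a dense set). For finite-flatness at $p$: the Galois representations attached to weight-$2$ forms of level $\Gamma_0(N^2)$, which is prime to $p$, are crystalline with Hodge--Tate weights $\{0,1\}$ at $p$, hence finite-flat; this is exactly the Fontaine--Laffaille / finite-flat condition $\mathfrak c$, and the Eisenstein series $E$ also contributes a finite-flat (indeed ordinary, $\epsilon\oplus 1$) representation at $p$. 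So the Cayley--Hamilton algebra attached to $D_\bT$ has $\mathfrak c$, i.e.\ $D_\bT$ has $\mathfrak c$. Therefore $D_\bT$ is classified by a $\Z_p$-algebra map $R \to \bT$.

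Finally, surjectivity: the image of $R \to \bT$ contains $\Tr_{D_\bT}(\Frob_\ell) = T_\ell$ for every $\ell \nmid Np$. Since $\bT$ is generated as a $\Z_p$-algebra by the Hecke operators $T_n$, it suffices to see the missing ones, $T_N$ and $T_p$, are also hit — but after localizing at $\m_{\bar\rho}$ these are determined by the $T_\ell$ with $\ell \nmid Np$ (the Eisenstein quotient has $T_N$ acting as $0$ and $T_p$ as a unit determined by $\epsilon$, and on the cuspidal part $U_N$, $U_p$ satisfy the usual relations forcing them into the subring generated by the good $T_\ell$ together with diamond operators, which here are trivial); alternatively, and more cleanly, one notes that the closure of the $\Z_p$-subalgebra generated by $\{T_\ell : \ell\nmid Np\}$ already equals $\bT$ because any proper closed subalgebra would be contained in a maximal ideal, contradicting strong multiplicity one / the fact that eigensystems are determined by their $\ell\nmid Np$ eigenvalues. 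Hence $R \onto \bT$. The main obstacle is the bookkeeping in the previous paragraph — pinning down that the Galois pseudorepresentation over $\bT$ genuinely satisfies the finite-flat condition $\mathfrak c$ at $p$ in the precise sense of \cref{sec:pseudodef} (i.e.\ producing a Cayley--Hamilton representation whose underlying module has $\mathfrak c$), rather than merely that each classical member is finite-flat; this is handled by the usual argument that finite-flatness is a deformation condition stable under the operations used to build $E_{\bar D,\mathfrak c}$, so it propagates from the residual/classical points to the whole family.
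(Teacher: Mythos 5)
Your overall strategy is the paper's: attach pseudorepresentations coming from the cuspidal eigenforms and the Eisenstein series, check the determinant and finite-flat conditions to get a map out of $R$ (the paper phrases this as a map $\Phi\colon R \to \Z_p\times\prod_{f\in S}\Q_p(f)$ rather than constructing a $\bT$-valued pseudorepresentation, but that difference is cosmetic), and then prove surjectivity by noting that the traces of Frobenii at good primes generate $R$ topologically and hit the operators $T_\ell$, $\ell\nmid Np$. Up to that point your argument is fine.

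The genuine gap is in the last step, where you must show that $T_N$ and especially $T_p$ lie in the (closed) $\Z_p$-subalgebra of $\bT$ generated by $\{T_\ell : \ell\nmid Np\}$. Your ``cleaner'' alternative does not work: a proper closed subalgebra contains $1$ and so is never contained in a maximal ideal, and the fact that eigensystems are determined by their eigenvalues at $\ell\nmid Np$ only identifies the two algebras after inverting $p$; two $\Z_p$-orders in the same product of fields can have the same eigensystems and still differ (e.g.\ $\{(a,b)\in\Z_p^2 : a\equiv b \bmod p\}\subsetneq \Z_p^2$), which is exactly the kind of index that Eisenstein congruences produce. Your first argument (``usual relations'') is also not a proof: since $p\nmid N^2$ the operator at $p$ is $T_p$, not $U_p$, and there is no relation expressing $T_p$ in terms of the good $T_\ell$; nor is $T_p$ a trace of Frobenius, because the representations are ramified (crystalline) at $p$, so Chebotarev does not reach it. The paper's proof supplies the missing content: $T_N=0$ in $\bT$ because every $f\in S$ is new at $N^2$ (so $a_N(f)=0$) and $a_N(E)=0$; and for $T_p$ one uses that each $f$ is ordinary at $p$ (being congruent to the ordinary Eisenstein series $E$), so that on $G_p$ one has $\Tr(\rho_f)=\epsilon\,\lambda(a_p(f))+\lambda(a_p(f))^{-1}$, and choosing $\tau\in I_p$ with $\epsilon(\tau)\not\equiv 1\bmod p$ gives the explicit element $\bigl(\Tr_{D^u}(\tau\Frob_p)-\Tr_{D^u}(\Frob_p)\bigr)/\bigl((\epsilon(\tau)-1)\epsilon(\Frob_p)\bigr)$ of $R$ mapping to $T_p$. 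Without an argument of this kind (or some substitute proving the anemic and full localized Hecke algebras coincide integrally), surjectivity onto all of $\bT$ is not established.
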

\begin{proof}
Since $\bT$ is known to be reduced, there is an injection $\bT \to \prod_{\p} \bT/\p$, where $\p$ ranges over the minimal primes of $\bT$.  There is one minimal prime given by the action of $\bT$ on the Eisenstein series $E$. The other minimal primes are the kernels of the maps $\bT \to \Q_p(f)$ for cuspidal eigenforms $f$ such that $\bar\rho_f = \omega \oplus 1$.  Let $S$ be the set of such cuspidal eigenforms.  Then there is an injection
\[
\bT \to \Z_p \times \prod_{f \in S} \Q_p(f),
\]
sending $T_n$ to $(a_n(E), (a_n(f))_{f \in S})$. We will identify $\bT$ with the image of this injection and construct a homomorphism $R \to \Z_p \times \prod_{f \in S} \Q_p(f)$ whose image is $\bT$.

For each $f \in S$,  the Galois representation $\rho_f$ defines a pseudorepresentation $D_f: G_{\Q,Np} \to \sO_f$ that deforms $\bar D$.  Since the level of $f$ is prime to $p$, $D_f$ satisfies the finite flat condition.  Also, the determinant of $D_f$ is $\epsilon$ since $f$ has weight 2 and trivial Nebentypus.  Hence $D_f$ defines a map $R \to \Q_p(f)$.  There is also a map $R \to \Z_p$ given by the pseudorepresentation $\epsilon \oplus 1$.  This defines a map
\[
\Phi: R \to \Z_p \times \prod_{f \in S} \Q_p(f).
\]
We have to show that the image of $\Phi$ is $\bT$.  For a prime $\ell \nmid Np$,  since $\Tr(\rho_f(\Frob_\ell))=a_\ell(f)$, the map $\Phi$ sends $\Tr_{D^u}(\Frob_\ell)$ to the image of $T_\ell$. Since the elements $\Tr_{D^u}(\Frob_\ell)$ topologically generate $R$ by Chebotarov density, this implies that the image of $\Phi$ equals the $\Z_p$-subalgebra of $\bT$ generated by $\{T_\ell \colon \ell \nmid Np \text{ prime}\}$.  It remains to show that this subalgebra contains $T_N$ and $T_p$. 
Since every $f \in S$ is new at $N^2$,  it follows that $a_N(f)=0$.  But $a_N(E)=0$ as well, so $T_N=0$ in $\bT$.
Finally,  $T_p$ is in the subalgebra generated by $\{T_\ell \colon \ell \nmid Np \text{ prime}\}$ by the ordinary property. Indeed, since each of the forms $f \in S$ is congruent to $E$ modulo $p$,  and $E$ is ordinary at $p$, each $f\in S$ is ordinary at $p$.  It follows that for $\sigma \in G_p$,
\[
\Tr(\rho_f)(\sigma) = \epsilon(\sigma)\lambda(a_p(f))(\sigma) + \lambda(a_p(f))^{-1}(\sigma),
\]
where $\lambda(x)$ is the unramified character of $G_p$ sending $\Frob_p$ to $x$.  If $\tau \in I_p$ is an element such that $\epsilon(\tau) \not \equiv 1 \mod{p}$, then
\[
\Tr(\rho_f)(\tau\Frob_p)-\Tr(\rho_f)(\Frob_p)=(\epsilon(\tau)-1)\epsilon(\Frob_p)a_p(f),
\]
so $T_p$ equals the image of $\frac{\Tr_{D^u}(\tau\Frob_p)-\Tr_{D^u}(\Frob_p)}{(\epsilon(\tau)-1)\epsilon(\Frob_p)}$.
\end{proof}

Let $R^\red = R \otimes_{R_{\bar D}} R_{\bar D}^\red$ be the quotient of $R$ that parameterizes reducibile deformations.

\begin{lemma}
\label{lem:Rred}
The homomorphism $R^\red \to \Z_p$ given by the reducible deformation $\epsilon \oplus 1$ is an isomorphism. 
\end{lemma}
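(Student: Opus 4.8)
The plan is to show that the reducible pseudodeformation functor, with the imposed conditions (determinant $\epsilon$, finite-flat at $p$, unramified outside $Np$), is rigid: it has a unique point, namely $\epsilon \oplus 1$. By the discussion in \cref{subsec:Computation of R} following \cite[Proposition 4.6.2]{WWE4}, the ring $R^\red$ is a quotient of $R^{\red}_{\bar D}$, which is identified with a completed tensor product $R_{\omega} \mathbin{\hat\otimes}_{W(\F)} R_{1}$ of deformation rings of the characters $\omega$ and $1$; imposing the product-of-characters determinant condition and the finite-flat condition cuts this down further. So the first step is to spell out exactly which tuples $(\tilde\chi_1, \tilde\chi_2)$ of continuous characters $G_{\Q,Np} \to A^\times$ lifting $(\omega, 1)$ are allowed: the constraint is $\tilde\chi_1 \tilde\chi_2 = \epsilon$, so $\tilde\chi_2$ determines $\tilde\chi_1 = \epsilon\tilde\chi_2^{-1}$, and we are reduced to classifying deformations $\tilde\chi_2$ of the trivial character subject to the local conditions inherited from $\mathfrak{c}$ and the ramification constraint.

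The second step is the local analysis. A continuous character $\tilde\chi_2 : G_{\Q,Np} \to A^\times$ lifting $1$ factors through the maximal pro-$p$ abelian quotient of $G_{\Q,Np}$, which by class field theory is controlled by $\Z_p^\times$ at $p$, by $\F_N^{\times,p\text{-part}}$ at $N$, and by the $p$-part of the class group of $\Q$ (which is trivial). The finite-flat condition at $p$ forces $\tilde\chi_2$ to be unramified at $p$ (a ramified finite-flat character lifting the trivial character would have to be a power of $\epsilon$, but then $\tilde\chi_2|_{I_p}$ is both a nontrivial power of $\epsilon$ and trivial mod $p$ — impossible unless it is trivial on $I_p$); hence $\tilde\chi_2$ is unramified at $p$, hence globally unramified outside $N$. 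The only remaining possible ramification is a tame character of $I_N$ of $p$-power order. But such a nontrivial $\tilde\chi_2$ would produce a reducible deformation $\epsilon\tilde\chi_2^{-1} \oplus \tilde\chi_2$ that is genuinely ramified at $N$ with $\tilde\chi_2 \not\equiv 1$; I claim the determinant constraint together with $\tilde\chi_2$ being unramified outside $N$ and trivial at $p$ forces $\tilde\chi_2$ to be everywhere unramified. Indeed an everywhere-unramified-outside-$N$ abelian pro-$p$ extension of $\Q$ that is tame at $N$ corresponds, via class field theory, to a quotient of $\F_N^{\times,p\text{-part}}$; but for this to glue to a \emph{global} character one needs it to be unramified at $p$ and $\infty$ and trivial on the global units, and the reciprocity map shows the resulting ray class group (ray modulo $N$) has trivial $p$-part precisely when... — here one uses that $\Q$ has class number $1$ and that the relevant ray class group modulo $N$ is cyclic of order $N-1$, whose $p$-part is trivial since $p \mid N+1$ (so $p \nmid N - 1$ as $p \ge 5$ and $p \nmid 2$). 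Therefore $\tilde\chi_2$ is trivial, so $\tilde\chi_1 = \epsilon$, and $R^\red = \Z_p$.

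Alternatively — and this may be cleaner — one can run the argument cohomologically via the tangent space: by the $\mathfrak c$-version of \eqref{eq:B dual} and the analogous identification for the reducible functor, the tangent space of $R^\red$ at $\epsilon \oplus 1$ is controlled by $H^1_{\mathfrak c}(G_{\Q,Np}, \F_p)$ (extensions of $1$ by $1$ satisfying finite-flat), and one shows this group vanishes: a class in $H^1(G_{\Q,Np},\F_p)$ is an unramified-outside-$Np$ homomorphism to $\F_p$, the finite-flat condition kills ramification at $p$, and then the resulting $\Z/p$-extension is unramified outside $N$ and tame at $N$, hence cut out by a quotient of $\F_N^{\times}$ of order $p$, which forces $p \mid N - 1$ — contradicting $p \mid N+1$. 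So $\mathfrak t_{R^\red} = 0$, whence $R^\red = \Z_p$ since it is a quotient of $W(\F_p) = \Z_p$ in $\cC$ with zero tangent space and the structure map $\Z_p \to R^\red$ is already surjective. The main obstacle is bookkeeping: carefully transporting the finite-flat deformation condition $\mathfrak c$ through the reducibility quotient to get the correct local condition on the abelian characters (i.e.\ verifying that ``finite-flat'' for the $2$-dimensional object forces each diagonal character to be finite-flat, equivalently unramified at $p$ in this residually-trivial situation), and making sure the $\infty$ and units conditions in the class field theory are handled so that the global character is genuinely forced to be trivial rather than merely unramified.
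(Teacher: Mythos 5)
Your main argument is correct and essentially the same as the paper's: use the determinant condition to reduce to showing the deformation $\tilde\chi_2$ of the trivial character is trivial, note it factors through the maximal abelian pro-$p$ quotient of $G_{\Q,Np}$, use finite-flatness to kill ramification at $p$ and $p \nmid (N-1)$ (class field theory / Kronecker--Weber) to kill ramification at $N$, and conclude triviality since $\Q$ has no unramified abelian $p$-extensions. The alternative tangent-space argument you sketch is a fine (if unneeded) variant of the same computation.
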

\begin{proof}
The ring $R_{\bar D}^\red$ is the tensor product of deformation rings of $\omega$ and $1$, with universal deformation $\chi_\omega \oplus \chi_1$, where $\chi_\omega$ and $\chi_1$ are the universal deformations and $\omega$ and $1$, respectively.  Fixing the determinant to be $\epsilon$ gives $\chi_\omega=\epsilon \chi_1^{-1}$ in $R^\red$, so it suffices to prove that $\chi_1=1$ in $R^\red$.
A deformation of $1$ factors through the maximal abelian pro-$p$ quotient of $G_{\Q,Np}$, which, by the Kronecker--Weber theorem, is the pro-$p$ quotient of $\Gal(\Q(\zeta_{N^\infty p^\infty})/\Q)$. Since $p \nmid (N-1)$, the maximal pro-$p$ quotient is unramified at $N$. The finite-flat condition forces $\chi_1$ to be unramified at $p$, so it must be unramified everywhere, and hence trivial.  
\end{proof}
Let $J=\ker(R \to R^\red)$ be the reducibility ideal of $R$; \cref{lem:Rred} implies that $R/J=\Z_p$.
Let $B=B_{\bar D,\mathfrak{c}}\otimes_{R_{\bar D,\mathfrak{c}}} R$ and $C=C_{\bar D,\mathfrak{c}}\otimes_{R_{\bar D,\mathfrak{c}}} R$. By \eqref{eq:BC=J}, there is a surjective map
\[
B \otimes_R C \onto J.
\]
\begin{lemma}
\label{lem:cyclicity}
The $R$-modules $B$ and $C$ are cyclic and $J$ is a principal ideal.
\end{lemma}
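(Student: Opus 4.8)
The plan is to prove cyclicity of $B$ and $C$ by identifying their co-reduced quotients with Galois cohomology groups via \eqref{eq:B dual} and showing those groups are one-dimensional, then deducing principality of $J$ from the surjection $B \otimes_R C \onto J$. By \eqref{eq:B dual} applied to the residual morphism $R \to \F_p$ (with $\chi_1 = \omega$, $\chi_2 = 1$), together with the finite-flat refinement noted at the end of \cref{sec:pseudodef}, we have $\Hom_{\F_p}(B/\m_R B, \F_p) \cong H^1_{\mathfrak{c}}(G_{\Q,Np}, \F_p(\omega))$ and $\Hom_{\F_p}(C/\m_R C, \F_p) \cong H^1_{\mathfrak{c}}(G_{\Q,Np}, \F_p(\omega^{-1}))$, where $H^1_{\mathfrak c}$ denotes the subgroup of classes whose associated extension is finite-flat at $p$. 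By Nakayama's lemma, $B$ and $C$ are cyclic over $R$ as soon as each of these cohomology groups is one-dimensional over $\F_p$.

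First I would bound $H^1_{\mathfrak{c}}(G_{\Q,Np}, \F_p(\omega))$. A class here is an extension of $\F_p$ by $\F_p(\omega)$ that is unramified outside $Np$ and finite-flat at $p$; since $\omega$ is nontrivial and odd, this is governed by the $\omega$-eigenspace of a class group, and the finite-flat-at-$p$ condition pins down the local behavior at $p$ exactly (the extension $\Q(\zeta_p)$ itself provides the one expected class, coming from the $N$-ramified Kummer-type extension). The clean way to organize this is a Greenberg--Wiles style Euler characteristic count: compare the global $H^1_{\mathfrak c}$ with local conditions at $N$ and $p$ and the dual Selmer group, using that the global Euler characteristic contributes the expected dimension and that the dual Selmer group vanishes (the relevant dual class group eigenspace being trivial because $p \nmid (N\pm 1)$ in the needed way, plus $p \ge 5$). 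Symmetrically, $H^1_{\mathfrak c}(G_{\Q,Np}, \F_p(\omega^{-1}))$: here $\omega^{-1}$ is even, so an unramified-outside-$Np$ class is essentially a ramified-at-$N$ abelian extension on which $\Delta$-type coordinates act through $\omega^{-1}$, and the finite-flat condition at $p$ (forcing unramified-at-$p$, as in the proof of \cref{lem:Rred}) together with $p \mid (N+1)$ produces exactly a one-dimensional space coming from the degree-$p$ subextension inside $\Q(\zeta_{N^2})$. Alternatively — and this is probably the slicker route — one can cite the cohomological computations already carried out in \cite{us} for precisely these groups, since those are the Galois-cohomological inputs to \cite[Theorem B]{us}.

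Once $B$ and $C$ are cyclic, write $B = R\beta$ and $C = R\gamma$. Then $B \otimes_R C = R(\beta \otimes \gamma)$ is cyclic, and the surjection $B \otimes_R C \onto J$ of \eqref{eq:BC=J} shows $J$ is generated by the single element $m(\beta \otimes \gamma) \in R$, i.e.\ $J$ is principal. This completes the proof.

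The main obstacle I expect is the cohomology computation — specifically, getting the two one-dimensional bounds exactly on the nose rather than just as upper bounds, which requires knowing both that the expected classes genuinely exist (constructing them from $\Q(\zeta_p)$ and from the ray class extension of conductor $N^2$) and that no unexpected classes appear (vanishing of the relevant dual Selmer group / class-group eigenspace, where the hypotheses $p \ge 5$ and $p \mid\mid$ or $p^r \mid\mid (N+1)$ with $p \nmid (N-1)$ must be used). The GMA and Nakayama bookkeeping, by contrast, is formal given the results recalled in \cref{sec:pseudodef}; and if the precise cohomology dimensions have already been established in \cite{us}, the entire lemma reduces to quoting those and invoking \eqref{eq:B dual}, \eqref{eq:BC=J}, and Nakayama.
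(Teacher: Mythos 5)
Your proposal follows the same route as the paper: identify $\Hom_R(B,\F_p)$ and $\Hom_R(C,\F_p)$ with $H^1_{\mathfrak{c}}(G_{\Q,Np},\F_p(1))$ and $H^1_{\mathfrak{c}}(G_{\Q,Np},\F_p(-1))$ via the conditioned version of \eqref{eq:B dual}, quote one-dimensionality, apply Nakayama, and then use \eqref{eq:BC=J} to get principality of $J$; the formal GMA/Nakayama part of your argument matches the paper's proof exactly. The only substantive difference is how the one-dimensionality is handled: the paper does not reprove it but cites \cite[Proposition 6.3.2 and Lemma 6.3.6]{WWE20} (with \cite{CE05} as an earlier source), whereas you either sketch a Greenberg--Wiles/Selmer computation or propose citing \cite{us}, which is not where these computations are carried out. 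Your sketch of the $\F_p(\omega^{-1})$ side also has a concrete slip: there is no degree-$p$ subextension inside $\Q(\zeta_{N^2})$, since $[\Q(\zeta_{N^2}):\Q]=N(N-1)$ is prime to $p$ (recall $p\nmid(N-1)$); the generating class is not abelian over $\Q$ but cuts out an extension of $\Q(\zeta_p)$, ramified at $N$, whose existence uses $p\mid(N+1)$ through the fact that the primes above $N$ in $\Q(\zeta_p)$ have residue field $\F_{N^2}$ and $p\mid N^2-1$. So the skeleton is right and identical to the paper's, but as written the key dimension input is not yet justified: either carry out the Selmer-group computation in full (with the corrected description of the classes) or cite \cite{WWE20} as the paper does.
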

\begin{proof}
By \cite[Theorem 4.3.5]{WWE4} (which is the analog of \eqref{eq:B dual} with deformation conditions), there are isomorphisms
\[
\Hom_{R}(B, \F_p) \cong H^1_{\mathfrak{c}}(G_{\Q,Np},\F_p(1)),  \quad \Hom_{R}(C, \F_p)=H^1_{\mathfrak{c}}(G_{\Q,Np},\F_p(-1)).
\]
These groups have been computed to be one-dimensional in \cite[Proposition 6.3.2 and Lemma 6.3.6]{WWE20}, respectively\footnote{See also \cite{CE05}, especially Lemma 3.9 and Proposition 5.4,  for an earlier proof of the same result, in slightly different terms.}.   Since $\Hom_{R}(B, \F_p)$ and $\Hom_{R}(C, \F_p)$ are one dimensional,  Nakayama's lemma implies that $B$ and $C$ are cyclic $R$-modules. Then the surjection
\[
B \otimes_R C \onto J
\]
of \eqref{eq:BC=J} implies that $J$ is principal.
\end{proof}

\begin{proposition}
\label{prop:structure of R}
There is an isomorphism
\[
R/(p,\m_{\bar D}^2) \isoto \F_p[\epsilon]/(\epsilon^2)
\]
given by a pseudorepresentation $D_{b,c,\phi}$ of the form \eqref{eq:bc tangent} with $\chi_1=\omega$ and $\chi_2=1$,  where $b$ and $c$ are cocycles representing generators of the groups $H^1_\mathfrak{c}(G_{\Q,Np},\F_p(1))$ and $H^1_\mathfrak{c}(G_{\Q,Np},\F_p(-1))$, respectively.
\end{proposition}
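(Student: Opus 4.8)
The plan is to compute the mod-$p$ tangent space $\mathfrak{t}_R = \Hom_{\F_p}(\m/(p,\m^2),\F_p)$, where $\m$ is the maximal ideal of $R$, and to show it is one-dimensional, with a preferred generator read off from cohomology. The structural inputs are already in hand: by \cref{lem:Rred}, $R/J = \Z_p$; by \cref{lem:cyclicity} and its proof, $B$ and $C$ are cyclic $R$-modules with $\Hom_R(B,\F_p) \cong H^1_\mathfrak{c}(G_{\Q,Np},\F_p(1))$ and $\Hom_R(C,\F_p) \cong H^1_\mathfrak{c}(G_{\Q,Np},\F_p(-1))$, both one-dimensional, and there is a surjection $B \otimes_R C \onto J$. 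First I would reduce this surjection modulo $\m$, compose with $J/\m J \onto J/(p,\m^2) \subseteq \m/(p,\m^2)$, and take $\F_p$-duals --- the same manipulation that produces the map in \eqref{eq:tangent} out of \eqref{eq:BC=J} and \eqref{eq:B dual}, now carried out with the finite-flat cohomology groups. This yields a map $\mathfrak{t}_R \to H^1_\mathfrak{c}(G_{\Q,Np},\F_p(1)) \otimes_{\F_p} H^1_\mathfrak{c}(G_{\Q,Np},\F_p(-1))$ whose kernel is the cotangent space of $R/J = \Z_p$, namely $0$. Hence $\mathfrak{t}_R$ embeds into a one-dimensional $\F_p$-space, and $\dim_{\F_p}\mathfrak{t}_R \le 1$.

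Next I would show $\mathfrak{t}_R \ne 0$. Here I use \cref{lem:R to T}: there is a surjection $R \onto \bT$, and $\bT \ne \Z_p$. Indeed, \cite[Theorem B]{us} provides a cuspidal eigenform $f$ of weight $2$ and level $\Gamma_0(N^2)$ with $\bar\rho_f = \omega \oplus 1$, so, using the embedding $\bT \hookrightarrow \Z_p \times \prod_{f'\in S}\Q_p(f')$ from the proof of \cref{lem:R to T}, the equality $\bT = \Z_p$ would force $a_n(f) = a_n(E)$ for all $n$, i.e.\ $f = E$ in $M_2(\Gamma_0(N^2))$ --- absurd, since $f$ is cuspidal and $E$ is not. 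Now if $\mathfrak{t}_R$ vanished, then $\m = (p)$ by Nakayama, so $R$, and hence its quotient $\bT$, would be a quotient of $\Z_p$; since $\bT$ is nonzero and $\Z_p$-torsion-free (it embeds in the product above), this would give $\bT = \Z_p$, a contradiction. Therefore $\dim_{\F_p}\mathfrak{t}_R = 1$ and $R/(p,\m^2) \isoto \F_p[\epsilon]/(\epsilon^2)$. To identify the isomorphism: the image of $\mathfrak{t}_R$ in $H^1_\mathfrak{c}(G_{\Q,Np},\F_p(1)) \otimes_{\F_p} H^1_\mathfrak{c}(G_{\Q,Np},\F_p(-1))$ is the whole (one-dimensional) space, so its generator is the class of $b \otimes c$ for $b$, $c$ cocycles representing generators of the two groups. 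By the discussion following \eqref{eq:tangent} --- in its finite-flat, determinant-$\epsilon$ form --- this tangent vector is represented by the pseudorepresentation $D_{b,c,\phi}$ of \eqref{eq:bc tangent} with $\chi_1 = \omega$ and $\chi_2 = 1$; the cochain $\phi$ exists because $b \cup c = 0$ in $H^2(G_{\Q,Np},\F_p)$, which is forced by the existence of the first-order pseudodeformation just produced, and the finite-flat condition is automatic in first order (or can be imposed via the GMA of \cref{ex:1-red}).

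The only step with genuine arithmetic content is the nonvanishing $R \ne \Z_p$; the rest is formal given \cref{lem:Rred,lem:cyclicity}. I extract it by quoting the existence of a congruent cusp form from \cite{us}, but one should keep in mind that this nonvanishing amounts to the vanishing of the cup product $H^1_\mathfrak{c}(G_{\Q,Np},\F_p(1)) \otimes_{\F_p} H^1_\mathfrak{c}(G_{\Q,Np},\F_p(-1)) \to H^2(G_{\Q,Np},\F_p)$ --- the arithmetic invariant that, in the prime-level setting of Mazur and Merel, governs the extra rank of the Eisenstein Hecke algebra. A minor technical point to watch is that the finite-flat, fixed-determinant variant of Bella\"iche's tangent-space sequence has the cotangent space of $R^\red$ as its ``reducible'' subspace, so that \cref{lem:Rred} genuinely kills it; this is routine given \cite{WWE4} but worth spelling out.
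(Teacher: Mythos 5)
Your proposal is correct and follows essentially the same route as the paper: it combines \cref{lem:Rred}, \cref{lem:cyclicity} (with the one-dimensionality of the $H^1_\mathfrak{c}$ groups), the surjection $R \onto \bT$ together with \cite[Theorem B]{us} for nontriviality, and Bella\"iche's tangent-space sequence to identify the tangent vector as $D_{b,c,\phi}$. The only differences are cosmetic --- you phrase the upper bound dually (injectivity of $\mathfrak{t}_R \to H^1_\mathfrak{c}(\F_p(1))\otimes H^1_\mathfrak{c}(\F_p(-1))$ rather than ``$J$ principal, so $\m_{\bar D}=(p,x)$'') and deduce $\bT \neq \Z_p$ from the existence of the cuspform instead of quoting that $\bT$ has $\Z_p$-rank at least $2$.
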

\begin{proof}
By \cref{lem:cyclicity},  there is an element $x \in J$ that generates $J$. By \cref{lem:Rred}, $R/J =\Z_p$.  This implies that $\m_{\bar D}=(p,x)$, and that the maximal ideal of $R/pR$ is principal.  There is a surjection $R \onto \bT$ by \cref{lem:R to T}, and $\bT$ is a free $\Z_p$-module of rank at least $2$ by \cite[Theorem B]{us}, so $R/pR \ne \F_p$.  Hence $R/(p,\m_{\bar D}^2)$ is isomorphic to $\F_p[\epsilon]/(\epsilon^2)$.  This isomorphism defines an element of the tangent space $\mathfrak{t}_{\bar D}$ of $R_{\bar D}$. This element cannot be a reducible deformation by \cref{lem:Rred}, so it must be of the claimed form.
\end{proof}

Let $R^{\mathrm{pseudo-min}}$ be the quotient of $R$ by the ideal generated by $\Tr_{D^u}(\sigma)-2$ for all $\sigma \in I_N$. This is called the \emph{pseudo-minimal} quotient because it is imposing the condition that the pseudorepresentation equals the trivial pseudorepresentation on inertia at $N$.  A pseudorepresentation is called \emph{minimal} if it comes from a Cayley--Hamilton representation $(E,D,\rho)$ such that $\rho|_{I_N}=1$. Note that a pseudo-minimal pseudorepresentation may not be minimal: a Steinberg-at-$N$ representation is pseudo-minimal but not minimal.

Under the surjection $R \onto \bT$ of \cref{lem:R to T}, the quotient $R^\mathrm{pseudo-min}$ should correspond to quotient of $\bT$ that acts on forms of level $\Gamma_0(N)$.  Since $p \nmid (N-1)$, results of Mazur \cite[Proposition II.9.7]{Mazur77} imply that there are no cuspforms $f$ of weight 2 and level $\Gamma_0(N)$ such that $\bar\rho_f = \omega \oplus 1$.  Accordingly, if $R \cong \T$ then one should expect that $R^\mathrm{pseudo-min} = R^\red =\Z_p$.  Indeed, this is the case.

\begin{lemma}
\label{lem:nothing at level N}
The map $R^\mathrm{pseudo-min} \to \Z_p$ given by the deformation $\epsilon \oplus 1$ is an isomorphism. 
\end{lemma}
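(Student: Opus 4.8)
The plan is to reduce the lemma first to a statement about the tangent space, and then to a purely local computation at $N$. The displayed map is a homomorphism of $\Z_p$-algebras to $\Z_p$, hence automatically surjective (the composite $\Z_p \to R^{\mathrm{pseudo-min}} \to \Z_p$ is the identity), so it suffices to show $R^{\mathrm{pseudo-min}} = \Z_p$. Since a complete Noetherian local $\Z_p$-algebra whose maximal ideal is generated by $p$ is a quotient of $\Z_p$, it is enough to show that the relative tangent space $\Hom_{\F_p}(\m/(p,\m^2),\F_p)$ of $R^{\mathrm{pseudo-min}}$ vanishes. Being a quotient of $R$, its tangent space injects into that of $R$, which by \cref{prop:structure of R} is one-dimensional and spanned by the pseudorepresentation $D_{b,c,\phi}$ of the form \eqref{eq:bc tangent} with $\chi_1=\omega$, $\chi_2=1$. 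Thus the lemma reduces to showing that $D_{b,c,\phi}$ is \emph{not} pseudo-minimal, i.e.\ that $\Tr_{D_{b,c,\phi}}(\sigma)\ne 2$ for some $\sigma\in I_N$.

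Next I would carry out the local computation. Because $\omega$ is unramified at $N$, both $\chi_1$ and $\chi_2$ are trivial on $I_N$, and the cocycles $b|_{I_N}$, $c|_{I_N}$ are genuine homomorphisms $I_N\to\F_p$. Feeding this, together with $\phi'=bc-\phi$, into \eqref{eq:bc tangent} gives
\[
\Tr_{D_{b,c,\phi}}(\sigma)=2+\epsilon\,b(\sigma)c(\sigma)\qquad(\sigma\in I_N).
\]
As $N\ne p$, the maximal pro-$p$ quotient of $I_N$ is procyclic, so $\Hom(I_N,\F_p)$ is one-dimensional; hence $b(\sigma)c(\sigma)$ vanishes identically on $I_N$ if and only if $b|_{I_N}=0$ or $c|_{I_N}=0$. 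So it remains to show that $b$ and $c$ are both ramified at $N$.

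For $b$ (a generator of $H^1_{\mathfrak c}(G_{\Q,Np},\F_p(1))$): were it unramified at $N$, it would lie in $H^1_{\mathfrak c}(G_{\Q,p},\F_p(1))$; but by Kummer theory and $\mathrm{Cl}(\Z[1/p])=0$, the group $H^1(G_{\Q,p},\F_p(1))\cong\Z[1/p]^\times\otimes\F_p$ is one-dimensional (using $p$ odd), generated by the Kummer class of $p$, and this class is not finite-flat at $p$ (a prolongation would be an extension of $\Z/p$ by $\mu_p$ over $\Z_p$, and those produce only Kummer classes of $p$-adic units). So $H^1_{\mathfrak c}(G_{\Q,p},\F_p(1))=0$, a contradiction. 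For $c$ (a generator of $H^1_{\mathfrak c}(G_{\Q,Np},\F_p(-1))$): if $c$ were unramified at $N$, I would argue it is unramified everywhere. The module underlying $c$ is a finite-flat extension of $\mu_p$ by $\Z/p$ over $\Z_p$; a connected--\'etale argument (the quotient is forced to equal $\mu_p$ by Raynaud's uniqueness of prolongations, and then the identity component splits off $\Z/p$) shows every such extension is split, so $c|_{G_p}=0$. An everywhere-unramified class in $H^1(G_\Q,\F_p(-1))$ is dual to a nonzero element of the $\omega^{-1}$-eigenspace of $\mathrm{Cl}(\Q(\zeta_p))\otimes\F_p$, which vanishes by Herbrand--Ribet since $\omega^{-1}$ corresponds to the Bernoulli number $B_2=1/6$, not divisible by $p$ for $p\ge 5$. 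This contradicts $c\ne 0$. (Both ramification statements also follow from the cohomology computations in \cite{WWE20}.)

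The reduction to ``$D_{b,c,\phi}$ is not pseudo-minimal'' and the computation of $\Tr_{D_{b,c,\phi}}$ on $I_N$ are routine given \cref{prop:structure of R}, and the ramification of $b$ at $N$ is an easy Kummer-theory statement. The main obstacle is the ramification of $c$ at $N$: one must pin down the finite-flat condition at $p$ precisely enough to see that a class unramified at $N$ must also be unramified at $p$, and then identify the resulting everywhere-unramified cohomology group with a class-group eigenspace that is, in this particular case, elementary to show is trivial.
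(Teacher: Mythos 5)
Your proof is correct and follows essentially the same route as the paper: reduce to showing the (mod-$p$) tangent space of $R^{\mathrm{pseudo-min}}$ vanishes, use \cref{prop:structure of R} to identify the unique tangent vector with $D_{b,c,\phi}$, and compute $\Tr_{D_{b,c,\phi}}(\sigma)=2+b(\sigma)c(\sigma)\epsilon$ on $I_N$. The only difference is that you prove directly that $b$ and $c$ are ramified at $N$ (Kummer theory with the finite-flat condition excluding the class of $p$, splitting of extensions of $\mu_p$ by $\Z/p$ over $\Z_p$, and Herbrand with $B_2=1/6$), and you justify the existence of a common $\sigma$ via procyclicity of $I_N^{(p)}$, whereas the paper simply asserts the ramification, implicitly relying on the cohomology computations cited from \cite{WWE20}; both justifications are valid.
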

\begin{proof}
The deformation $\epsilon \oplus 1$ is obviously pseudo-minimal (in fact, minimal), so it defines a surjective homomorphism $R^\mathrm{pseudo-min} \onto \Z_p$.  To show it is an isomorphism, it is enough to show that the tangent space of $R^\mathrm{pseudo-min}/pR^\mathrm{pseudo-min}$ is trivial.  Since the tangent space of $R/pR$ is one-dimensional and generated by $D_{b,c,\phi}$ by \cref{prop:structure of R}, it is enough to show that $D_{b,c,\phi}$ is not pseudo-minimal.  Recall the formula \eqref{eq:bc tangent}
\[
D_{b,c,\phi}(x)=\omega(x)+1+\epsilon(\omega(x)\phi(x)+b(x)c(x)-\phi(x)).
\]
Since $\omega$ is unramified at $N$, for $\sigma \in I_N$ this equation simplifies to
\[
D_{b,c,\phi}(\sigma)=2+b(\sigma)c(\sigma)\epsilon.
\]
But the cocycles $b$ and $c$ must be ramified at $I_N$, so there is $\sigma \in I_N$ such that $b(\sigma)c(\sigma) \ne 0$.  This implies that $\epsilon$ is in the kernel of the map
\[
\F_p[\epsilon]/(\epsilon^2) \isoto	R/(p,\m^2) \onto R^\mathrm{pseudo-min}/(p,\m^2),
\]
completing the proof.
\end{proof}

\section{Computation of the local deformation ring}
\label{sec:local}
In this section we define a local deformation ring $R_N$ that is naturally augmented over $\Z_p$ with augmentation ideal $I$.  The global deformation ring $R$ is an $R_N$-algebra in a natural way and the extension $IR$ of $I$ to $R$ is the kernel of the map $R \to R^\mathrm{pseudo-min}$.  In particular, \cref{lem:nothing at level N} implies that $R/IR=\Z_p$.  This says that the global deformations are completely controlled by the local deformations (indeed, by Nakayama's lemma, it says that $R$ is a cyclic $R_N$-module).   Finally, we completely characterize the local deformations, proving that they all come from inducing a character of $G_{\Q_{N^2}}$, and thereby prove an isomorphism $R_N \isoto \Lambda^+$. 
\subsection{The deformation ring of the supercuspidal character}
One way to construct a deformation $\rho : G_N \to \GL_2(A)$ of $\overline{D}|_{G_N}$ with unramified determinant is to induce a character from $G_{N^2}$.   As a preliminary to considering such inductions, we recall some properties of the universal such character.

Let $\tilde{\Lambda}$ be the universal deformation ring of the trivial character $G_{N^2}\to \F_p$,  where $G_{N^2}=\Gal(\bar{\Q}_N/\Q_{N^2})$.  By \cite[Section 1.4]{MazurDef89}, there is an isomorphism
\[
\tilde{\Lambda} = \Z_p \lb G_{N^2}^{\mathrm{ab, pro-}p} \rb,
\]
where $G_{N^2}^{\mathrm{ab, pro-}p}$ is the maximal abelian pro-$p$ quotient and the universal character is the tautalogical one.  Fix a choice of Frobenius element $\Frob_{N^2} \in G_{N^2}$.  The local Artin map induces an isomorphism $G_{N^2}^{\mathrm{ab, pro-}p}\cong \Q_{N^2}^{\times,\mathrm{pro-}p}$ that sends $\Frob_{N^2}$ to $N$.  Let $\Lambda$ denote the quotient of $\tilde{\Lambda}$ given by identifying $\Frob_{N^2}$ with $-N$.  Using the local Artin isomorphism as an identification, 
$\Lambda$ is identified with $\Z_p[\Delta]$, where $\Delta=\Z_{N^2}^{\times,\mathrm{pro-}p} = \F_{N^2}^{\times,\mathrm{pro-}p}$ is a cyclic group of order $p^r$, where $r=v_p(N+1)$.  (Here $v_p$ is the $p$-adic valuation normalized such that $v_p(p) = 1$.)  Denote the universal character $G_{N^2} \to \Lambda^\times$ by $[-]$.  Let $\delta \in \Delta$ be a generator.

Consider the Galois representation
\begin{equation}
\label{eq:rhoN}
\rho_N: G_N \to \GL_2(\Lambda)
\end{equation}
given by $\Ind_{G_{N^2}}^{G_N} [-]$.  This is a deformation of $\bar D|_{G_N}$ and it satisfies $\det(\rho_N)=\epsilon$.  For $\sigma \in I_N$, the trace of $\rho_N(\sigma)$ is given by
\begin{equation}
\label{eq:Tr(rhoN)}
\Tr(\rho_N(\sigma))=[\sigma]+[\sigma]^{-1}.
\end{equation}
This lands in the subring $\Lambda^+ \subset \Lambda$ fixed by the involution $\iota$ that acts as inversion on group-like elements.  For later use, we recall the structure of the ring $\Lambda^+$. 

\begin{lemma}
\label{lem:Lam+}
There is an isomorphism $\frac{\Z_p[x]}{(x \Psi(x))} \isoto \Lambda^+$ given by $x \mapsto [\delta]+[\delta^{-1}]-2$, where $\Psi(x)$ is a distinguished polynomial of degree $\frac{p^r-1}{2}$ with $v_p(\Psi(0))=r$.
\end{lemma}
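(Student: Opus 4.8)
The plan is to exploit the fact that $\Delta$ is cyclic of order $p^r$, so $\Lambda = \Z_p[\Delta] \cong \Z_p[y]/(y^{p^r}-1)$ via $\delta \mapsto y$, and the involution $\iota$ corresponds to $y \mapsto y^{-1}$. First I would identify $\Lambda^+$ concretely as the fixed ring. Setting $u = y + y^{-1}$ (equivalently, working inside $\Lambda[y^{-1}] = \Lambda$ since $y$ is a unit), one checks that $\Lambda^+ = \Z_p[u]$: indeed $\Lambda$ is free of rank $2$ over $\Z_p[u]$ with basis $\{1, y\}$, the involution swaps the two pieces appropriately, and a standard averaging/Newton's-identity argument (valid since $p$ is odd, so $2$ is invertible) shows every $\iota$-fixed element is a polynomial in $u$. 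So $\Lambda^+ \cong \Z_p[u]/\mathfrak{a}$ for some ideal $\mathfrak{a}$, and it remains to compute $\mathfrak{a}$ and then re-center at $x = u - 2$.

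Next I would compute the minimal polynomial of $u = \delta + \delta^{-1}$ over $\Z_p$ inside $\Lambda$. Since $y^{p^r} = 1$, we have $y^{p^r} - 1 = \prod_{d \mid p^r}\Phi_d(y) = (y-1)\prod_{i=1}^{r}\Phi_{p^i}(y)$, where $\Phi_{p^i}$ is the $p^i$-th cyclotomic polynomial. Each $\Phi_{p^i}(y)$ for $i \ge 1$ is palindromic of even degree $\phi(p^i) = p^{i-1}(p-1)$, hence equals $y^{\phi(p^i)/2} \Phi_{p^i}^+(u)$ for a monic polynomial $\Phi_{p^i}^+ \in \Z_p[u]$ of degree $\phi(p^i)/2$; this is the classical "minimal polynomial of $\zeta_{p^i} + \zeta_{p^i}^{-1}$" and it is Eisenstein at $p$ (its constant term is $\Phi_{p^i}(1) \cdot(\pm 1)$ up to a unit — more precisely $\Phi_{p^i}^+(\zeta + \zeta^{-1}) = 0$ and one computes $v_p$ of its constant term directly). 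The factor $y - 1$ contributes the relation $u = 2$, i.e. $x = 0$. Collecting: in $\Lambda^+ = \Z_p[u]$ the relation ideal $\mathfrak{a}$ is generated by $(u-2)\prod_{i=1}^{r}\Phi_{p^i}^+(u)$. Translating via $x = u - 2$, set $\Psi(x) = \prod_{i=1}^{r}\Phi_{p^i}^+(x+2)$; this is monic of degree $\sum_{i=1}^r \phi(p^i)/2 = (p^r - 1)/2$, and one gets the presentation $\Z_p[x]/(x\Psi(x)) \isoto \Lambda^+$ with $x \mapsto [\delta] + [\delta^{-1}] - 2$.

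Finally I would verify the two stated properties of $\Psi$. \emph{Distinguished (Weierstrass) at $p$}: each $\Phi_{p^i}^+(x+2)$ is the analogue of an Eisenstein polynomial — modulo $p$ it reduces to a power of $x$, since modulo $p$ one has $\Phi_{p^i}(y) \equiv (y-1)^{\phi(p^i)} \pmod p$, hence $\Phi_{p^i}^+(u) \equiv (u-2)^{\phi(p^i)/2} = x^{\phi(p^i)/2} \pmod p$. Multiplying over $i = 1, \dots, r$ shows $\Psi(x) \equiv x^{(p^r-1)/2} \pmod p$, so $\Psi$ is distinguished. \emph{Valuation of the constant term}: $\Psi(0) = \prod_{i=1}^r \Phi_{p^i}^+(2) = \prod_{i=1}^r \Phi_{p^i}(1)$ (evaluating the palindromic identity $\Phi_{p^i}(y) = y^{\phi(p^i)/2}\Phi_{p^i}^+(y+y^{-1})$ at $y = 1$), and it is classical that $\Phi_{p^i}(1) = p$ for every $i \ge 1$; hence $\Psi(0) = p^r$ up to sign and $v_p(\Psi(0)) = r$. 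The main obstacle — really the only non-formal point — is pinning down that $\Lambda^+$ is exactly $\Z_p[u]$ (not something larger or with denominators) and that the relation ideal is principal generated by the product above; this comes down to the rank-$2$ freeness of $\Lambda$ over $\Z_p[u]$ together with $\iota$-invariance, for which oddness of $p$ is essential, and to checking the cyclotomic factorization is compatible with the palindrome-to-$\Z_p[u]$ substitution. Everything else is the standard arithmetic of the maximal real subfield $\Q_p(\zeta_{p^i} + \zeta_{p^i}^{-1})$.
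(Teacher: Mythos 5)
Your overall route is sound and lands on the same $\Psi$ as the paper, but one step you single out as the crux is false as stated: $\Lambda$ is \emph{not} free of rank $2$ over $\Z_p[u]$ with basis $\{1,y\}$. Indeed $\mathrm{rank}_{\Z_p}\Lambda = p^r$ while $\mathrm{rank}_{\Z_p}\Z_p[u] = \mathrm{rank}_{\Z_p}\Lambda^+ = \tfrac{p^r+1}{2}$, and $2\cdot\tfrac{p^r+1}{2} = p^r+1 \neq p^r$; concretely, the norm element $\nu = \sum_{j=0}^{p^r-1} y^j$ lies in $\Z_p[u]$ (it is $1+\sum_j (y^j+y^{-j})$, a polynomial in $u$) and satisfies $\nu\cdot y = \nu\cdot 1$, so $\{1,y\}$ generates but is not a basis. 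The failure of freeness comes exactly from the trivial component $y\mapsto 1$, which is also why the relation is $x\Psi(x)$ rather than $\Psi$ alone. Fortunately your parenthetical averaging/symmetric-function remark already gives the correct proof that $\Lambda^+ = \Z_p[u]$ (this is the paper's argument: every $\iota$-fixed element is a symmetric polynomial in $[\delta],[\delta^{-1}]$, hence a polynomial in $u$ since $[\delta][\delta^{-1}]=1$), so drop the freeness claim. The second place this matters is your ``collecting'' step identifying the relation ideal: showing that $x\Psi(x)$ maps to zero is easy (it equals a unit times $(y-1)(y^{p^r}-1)$ in $\Lambda$), but showing the kernel is no larger needs an argument, and the clean one is the paper's: $\Z_p[x]/(x\Psi(x))$ and $\Lambda^+$ are free $\Z_p$-modules of the same rank $\tfrac{p^r+1}{2}$, so the surjection between them is an isomorphism. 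With those two repairs your proof is complete.

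Apart from this, your verification of the properties of $\Psi$ is correct and in fact more direct than the paper's: you factor $y^{p^r}-1$ into cyclotomic polynomials inside the group ring, use palindromy to write $\Phi_{p^i}(y)=y^{\phi(p^i)/2}\Phi_{p^i}^+(u)$, get distinguishedness from $\Phi_{p^i}(y)\equiv (y-1)^{\phi(p^i)} \bmod p$, and compute $\Psi(0)=\prod_i \Phi_{p^i}(1)=p^r$ exactly, whereas the paper deduces $v_p(\Psi(0))=r$ from the embedding $\Lambda \hookrightarrow \Z_p\times\prod_i \Z_p[\zeta_{p^i}]$ and total ramification of $\Z_p[\zeta_{p^i}+\zeta_{p^i}^{-1}]$. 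The two arguments are close in spirit (both hinge on $\Lambda^+=\Z_p[u]$ plus an equal-rank count); yours trades the fiber-product embedding for explicit cyclotomic algebra.
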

\begin{proof}
First note that $\Lambda^+$ is equal to the subring of $\Lambda$ generated by $[\delta]+[\delta^{-1}]$. Indeed, every element of $\Lambda^+$ can be represented by a symmetric polynomial in $[\delta]$ and $[\delta^{-1}]$, and every such polynomial is a polynomial in $[\delta]+[\delta^{-1}]$.

Next note that the map
\begin{equation}
\label{eq:Lam+ fiber}
\Lambda \to \Z_p \times \prod_{i=1}^r \Z_p[\zeta_{p^i}]
\end{equation}
sending $[\delta]$ to $(1,\zeta_p,\dots,\zeta_{p^r})$ is injective with $p$-torsion cokernel. Taking $\iota$-fixed parts gives a map
\[
\Lambda^+ \to \Z_p \times \prod_{i=1}^r \Z_p[\zeta_{p^i}+\zeta_{p^i}^{-1}],
\]
again injective with $p$-torsion cokernel.  Hence the surjective map $\Z_p[x] \onto \Lambda^+$ given by $x \mapsto [\delta]+[\delta^{-1}]-2$ sends $x\Psi(x)$ to zero, where $\Psi(x)$ is the product of the minimal polynomials $\Psi_i(x)$ of $\zeta_{p^i}+\zeta_{p^i}-2$.  The induced map $\Z_p[x]/(x\Psi(x)) \onto \Lambda^+$ is a surjective homomorphism of free $\Z_p$-modules of the same finite rank, so it is an isomorphism.  Since each ring $\Z_p[\zeta_{p^i}+\zeta_{p^i}^{-1}]$ is totally ramified over $\Z_p$,  the polynomials $\Psi_i(x)$ are Eisenstein, so $v_p(\Psi(0))=r$. 
\end{proof}

\subsection{A computation of an inertial pseudodeformation ring}	
\label{subsec:R_N}
We now define a kind of local deformation ring $R_N$. Roughly speaking, it is the ring parameterizing ``deformations on inertia that extend to the decomposition group".  The main result of this section 
is \cref{prop:R_N=Lam}, which states that all inertia deformations that extend to the decomposition group are supercuspidal, in the sense that they arise from an induction construction. 

We first recall some properties of local Galois groups. There is an exact sequence
\[
0 \to I_N \to G_N \to \Gal(\Q_N^\mathrm{nr}/\Q_N) \to 0,
\] 
where $\Q_N^\mathrm{nr}$ is the maximal unramified extension. The group $\Gal(\Q_N^\mathrm{nr}/\Q_N)$ is isomorphic to $G_{\F_N}$ and hence is topologically generated by $\Frob_N \in G_N$. The group $I_N$ is complicated, but its maximal pro-$p$ quotient $I_N^{(p)}$ is pro-cyclic. Let $\tau \in I_N$ be a element that topologically generates $I_N^{(p)}$.  Frobenius acts on $\tau$ by
\[
\Frob_N \tau \Frob_N^{-1} = \tau^N.
\]

If $\rho$ is a representation of $I_N$ that extends to a representation of $G_N$, then $\rho(\tau)$ and $\rho(\tau^N)$ must be conjugate and thus have the same traces and determinants.  This motivates the following definition.

\begin{definition}
\label{defn:RN}
Let $R_N$ be the quotient of $R_{\bar D|_{I_N}}$ by the ideal generated by:
\begin{itemize}
\item $D^u(\sigma)-1$ for all $\sigma \in I_N$, and 
\item $\Tr_{D^u}(\tau)-\Tr_{D^u}(\tau^N)$.
\end{itemize} 
The pseudorepresentation associated to the trivial representation $I_N \to \GL_2(\Z_p)$ defines a map $R_N \onto \Z_p$, making $R_N$ into an augmented $\Z_p$-algebra.  Let $I=\ker(R_N \to \Z_p)$ be the augmentation ideal; explicitly, it is the ideal generated by $\Tr_{D^u}(\sigma)-2$ for all $\sigma \in I_N$.
\end{definition}

Of course, if $\rho: G_N \to A$ is a deformation of $\bar D|_{G_N}$ with unramified determinant, then $\rho|_{I_N}$ defines a map $R_N \to A$.  Thus restricting the universal pseudodeformation $D^u: G_{\Q, Np} \to R$ to $I_N$ induces a ring homormorphism $R_N \to R$.  The following lemma shows we are in the unusual situation that this map is surjective.

\begin{lemma}
\label{lem:SurjectiveToR}
The natural map $R_N \to R$ is surjective.
\end{lemma}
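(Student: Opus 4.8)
The plan is to show that the map $R_N \to R$ hits a set of topological generators of $R$. By \cref{prop:structure of R}, the maximal ideal of $R$ modulo $p$ is principal, generated by any element $x$ of $J$ lifting a generator of $\m_{\bar D}/(p,\m_{\bar D}^2)$; equivalently, by the universal principal-ideal description, $R$ is topologically generated over $\Z_p$ by a single element $x \in J$. So it suffices to produce \emph{some} element of $J \subseteq R$ in the image of $R_N \to R$ that is nonzero modulo $(p,\m_{\bar D}^2)$ — any such element generates $\m_{\bar D}$ modulo $p$, and then surjectivity follows from the complete-local Nakayama lemma (together with the fact that both rings are $\Z_p$-algebras via the section given by the reducible deformation $\epsilon \oplus 1$, so the augmentation ideals are what matter).

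The natural candidate comes from the tangent-space computation. By \cref{prop:structure of R}, the nonzero tangent vector of $R/pR$ is $D_{b,c,\phi}$ with $\chi_1 = \omega$, $\chi_2 = 1$, and $b, c$ cocycles generating $H^1_\mathfrak{c}(G_{\Q,Np},\F_p(1))$ and $H^1_\mathfrak{c}(G_{\Q,Np},\F_p(-1))$. The key input, already exploited in the proof of \cref{lem:nothing at level N}, is that $b$ and $c$ are \emph{ramified} at $N$: there exists $\sigma \in I_N$ with $b(\sigma)c(\sigma) \neq 0$. For such $\sigma$, formula \eqref{eq:bc tangent} (with $\omega$ unramified at $N$) gives $\Tr_{D_{b,c,\phi}}(\sigma) - 2 = b(\sigma)c(\sigma)\,\epsilon \neq 0$ in $\F_p[\epsilon]/(\epsilon^2)$. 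Now $\Tr_{D^u}(\sigma) - 2$ for $\sigma \in I_N$ is, on the one hand, an element of the augmentation ideal $I$ of $R_N$, hence lies in the image of $R_N \to R$; on the other hand, since the trace of $D^u|_{I_N}$ is unchanged by $\epsilon$-action (the determinant is unramified and $\omega$ is unramified at $N$, so the only contribution is through $b,c$), it lies in the reducibility ideal $J$ — indeed on $I_N$ the pseudorepresentation is a deformation of the trivial pseudorepresentation, and $\Tr_{D^u}(\sigma) - 2 \in J$ because it vanishes in $R^\red = \Z_p$ by \cref{lem:Rred} (the reducible quotient is $\epsilon \oplus 1$, which is unramified at $N$). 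Picking $\sigma \in I_N$ with $b(\sigma)c(\sigma) \neq 0$, the element $\Tr_{D^u}(\sigma) - 2 \in R$ reduces to $b(\sigma)c(\sigma)\,\epsilon \neq 0$ in $R/(p,\m_{\bar D}^2) \cong \F_p[\epsilon]/(\epsilon^2)$, so it generates $\m_{\bar D}$ modulo $p$.

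I would then conclude: $R$ is generated as a topological $\Z_p$-algebra by $\Tr_{D^u}(\sigma) - 2$ for this one $\sigma \in I_N$ (since $R$ is topologically generated over $\Z_p$ by any element of $\m_{\bar D}$ whose image generates $\m_{\bar D}/(p,\m_{\bar D}^2)$, using that $\m_{\bar D}/p$ is principal), and this element is manifestly in the image of $R_N \to R$. Hence $R_N \to R$ is surjective. The step I expect to require the most care is the bookkeeping that $\Tr_{D^u}(\sigma)-2$ for $\sigma \in I_N$ genuinely lands in $J$ and reduces to $b(\sigma)c(\sigma)\epsilon$ under the isomorphism of \cref{prop:structure of R} — i.e.\ tracking that the tangent vector identified in \cref{prop:structure of R} is exactly $D_{b,c,\phi}$ and that its trace on $I_N$ is computed by \eqref{eq:bc tangent} with the $\omega\phi - \phi$ terms killed by unramifiedness of $\omega$ — but this is precisely the computation already carried out in \cref{lem:nothing at level N}, so it is available essentially verbatim.
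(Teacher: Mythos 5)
Your argument is correct, but it routes through different intermediate results than the paper does, so it is worth comparing. The paper's proof is a two-line Nakayama argument that never touches the tangent space at this point: the ideal $IR \subseteq R$ generated by $\Tr_{D^u}(\sigma)-2$ for $\sigma \in I_N$ is by definition the kernel of $R \onto R^{\mathrm{pseudo-min}}$, so \cref{lem:nothing at level N} gives $R/IR = \Z_p$ directly, and Nakayama finishes. You instead unfold the content of \cref{lem:nothing at level N} (ramification of $b$ and $c$ at $N$, and the evaluation $D_{b,c,\phi}(\sigma)=2+b(\sigma)c(\sigma)\epsilon$ on $I_N$) and combine it with \cref{prop:structure of R} and the principality statements of \cref{lem:cyclicity} and \cref{lem:Rred} to exhibit a single element $\Tr_{D^u}(\sigma)-2$, $\sigma \in I_N$, generating $\m_{\bar D}/(p,\m_{\bar D}^2)$, whence $R=\Z_p\lb \Tr_{D^u}(\sigma)-2\rb$ topologically. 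This is logically sound (the side discussion of whether the element lies in $J$ is not actually needed: any image element that is nonzero in the one-dimensional cotangent space suffices), and it buys the slightly sharper conclusion that $R$ is topologically generated over $\Z_p$ by the trace of a single well-chosen inertia element; the cost is that you invoke the full tangent-space computation and principality of $J$, which the paper's proof of this lemma does not need, since \cref{lem:nothing at level N} already packages exactly the required statement $R/IR=\Z_p$.
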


\begin{proof}
Let $I \subset R_N$ be the augmentation ideal of $R_N$ as in \cref{defn:RN}.  The ideal $IR$ is generated by $\Tr_{D^u}(\sigma)-2$ for all $\sigma \in I_N$, which is exactly the kernel of $R \to R^\mathrm{pseudo-min}$, so $R/IR=R^\mathrm{pseudo-min}$. By \cref{lem:nothing at level N}, this implies $R/IR=\Z_p$.  Then by Nakayama's lemma, the map $R_N \to R$ is surjective.
\end{proof}

There is a quotient of $R_N$ that parameterizes supercuspidal (that is, induced) deformations. Indeed, the pseudorepresentation $\rho_N: G_N \to \GL_2(\Lambda)$ constructed in the previous section is the universal induced representation.  By \eqref{eq:Tr(rhoN)}, its pseudorepresentation on inertia has values in the subring $\Lambda^+$ of $\Lambda$. This defines a surjective homomorphism $
R_N \onto \Lambda^+$.  The following proposition shows that, in fact, all deformations are supercuspidal.  (Note that such deformations are allowed to be reducible; for instance $1 \oplus \epsilon$ is supercuspidal as it is the induction of the trivial character.)
\begin{proposition}
\label{prop:R_N=Lam}
The map $R_N \onto \Lambda^+$ is an isomorphism of augmented $\Z_p$-algebras.
\end{proposition}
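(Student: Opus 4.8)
The plan is to show that the surjection $R_N \onto \Lambda^+$ of the preceding discussion is injective by producing an inverse, or equivalently by showing that any pseudodeformation parametrized by $R_N$ comes from an induced representation in a way controlled by $\Lambda^+$. The first step is to understand $\bar D|_{I_N}$ itself: since $\omega$ and the trivial character are both unramified at $N$, the residual pseudorepresentation on $I_N$ is just $1 \oplus 1$, the trivial pseudorepresentation of degree $2$. So $R_N$ is a quotient of the universal pseudodeformation ring of the trivial $2$-dimensional pseudorepresentation of $I_N$, cut down by (a) fixed-determinant-trivial, i.e. $D^u(\sigma) = 1$ for all $\sigma \in I_N$, and (b) the Frobenius-compatibility relation $\Tr_{D^u}(\tau) = \Tr_{D^u}(\tau^N)$. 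Because $2$ is invertible, a degree-two pseudodeformation here is determined by its trace function $T = \Tr_{D^u}$, and the determinant condition pins down $D^u$ in terms of $T$ by $D^u(x) = \frac{T(x)^2 - T(x^2)}{2}$; imposing $D^u \equiv 1$ then becomes a family of relations among the values of $T$.

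Next I would exploit that $I_N^{(p)}$ is pro-cyclic, topologically generated by $\tau$. A pseudorepresentation of $I_N$ with determinant trivial and residually trivial factors through $I_N^{(p)}$ once we work over rings in $\cC$ (a deformation of the trivial character to a pro-$p$ ring factors through the maximal pro-$p$ quotient; for the pseudorep this follows from the GMA description together with the fact that the off-diagonal cohomology $H^1(I_N, \F_p)$ only sees the tame quotient). So effectively $R_N$ is governed by the single element $t := T(\tau) - 2 \in I$, subject to: the determinant-$1$ relations, and $T(\tau) = T(\tau^N)$. The key computation is to express $T(\tau^n)$ as a polynomial in $t$ (Chebyshev-like: if $T(\tau) = \zeta + \zeta^{-1}$ then $T(\tau^n) = \zeta^n + \zeta^{-n}$, so $T(\tau^n) - 2 = $ a polynomial in $t$ vanishing to order one at $t=0$), and then to show that the relation $T(\tau) = T(\tau^N)$ is exactly the relation $t\,\Psi(t) = 0$ of \cref{lem:Lam+}, up to a unit. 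Concretely, $\zeta^N + \zeta^{-N} = \zeta + \zeta^{-1}$ forces $\zeta^{N-1} = 1$ or $\zeta^{N+1} = 1$; since $p \nmid N-1$, the pro-$p$ part only contributes $\zeta^{p^r} = 1$ (using $r = v_p(N+1)$), which is precisely the condition defining $\Lambda = \Z_p[\Delta]$ and its $\iota$-fixed subring $\Lambda^+$. This matches $R_N \onto \Lambda^+$ being an isomorphism.

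The cleanest way to make the last paragraph rigorous, and what I would actually write, is: (i) produce a surjection $\Z_p[x]/(x\Psi(x)) \onto R_N$ by sending $x \mapsto t = \Tr_{D^u}(\tau) - 2$ — surjectivity because $I$ is topologically generated by the $\Tr_{D^u}(\sigma) - 2$ and, via pro-cyclicity, these are all polynomials in $t$ — and checking $x\Psi(x)$ maps to $0$ by the argument that $\Tr_{D^u}(\tau) = \Tr_{D^u}(\tau^N)$ together with $\Tr_{D^u}(\tau^N) - \Tr_{D^u}(\tau)$ being (a unit times) $\Psi$ evaluated at $t$, using $N \equiv -1 \bmod p^r$ and $N \not\equiv 1 \bmod p$; (ii) compose with the surjection $R_N \onto \Lambda^+$ and identify the composite with the isomorphism $\Z_p[x]/(x\Psi(x)) \isoto \Lambda^+$ of \cref{lem:Lam+}, since both send $x$ to $[\delta] + [\delta^{-1}] - 2 = \Tr(\rho_N(\tau)) - 2$; (iii) conclude both surjections are isomorphisms. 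The main obstacle is step (i): justifying that the pseudodeformation really is controlled by the single parameter $t$, i.e. that over a pro-$p$ coefficient ring the trace function on all of $I_N$ is determined by its value on $\tau$ and is a polynomial in $t$ — this requires care because $I_N$ is far from pro-$p$, and one must argue that the wild and tame-prime-to-$p$ parts act trivially under a fixed-determinant-trivial, residually-trivial pseudodeformation to a ring in $\cC$ (equivalently, that the relevant pseudodeformation functor on $I_N$ agrees with that on $I_N^{(p)} \cong \Z_p$). Once that reduction is in hand, everything else is the Chebyshev polynomial bookkeeping of \cref{lem:Lam+} run in reverse.
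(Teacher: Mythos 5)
Your route is, in substance, the paper's: reduce to the pro-cyclic quotient $I_N^{(p)}=\overline{\dia{\tau}}$ so that everything is a function of $t=\Tr_{D^u}(\tau)-2$, then use the eigenvalue/Chebyshev factorization $\zeta^{N}+\zeta^{-N}-\zeta-\zeta^{-1}=-\zeta^{-N}(\zeta^{N+1}-1)(\zeta^{N-1}-1)$ together with $p\nmid N-1$ and $p^r\mid\mid N+1$ to identify the relation $\Tr_{D^u}(\tau)=\Tr_{D^u}(\tau^N)$ with $t\Psi(t)=0$ up to a unit. Your closing sandwich $\Z_p[x]/(x\Psi(x))\onto R_N\onto\Lambda^+$, with the composite identified with the isomorphism of \cref{lem:Lam+}, is a mildly different (and clean) way to finish; the paper instead shows $R_N\cong\Z_p\lb x\rb/(f)$ exactly, embeds it into $A=\Z_p\lb\lambda-1\rb/\big((\lambda-1)(\lambda^{p^r}-1)\big)$ as the subring generated by $(\lambda-1)^2/\lambda$, and checks injectivity onto $\Lambda^+$ there. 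These endgames are interchangeable.

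The genuine gap is precisely at the step you flag as the main obstacle, and the justification you sketch for it does not work. The residual pseudorepresentation on $I_N$ is $1\oplus 1$, which is \emph{not} multiplicity-free, so the GMA structure theory recalled in Section 2 (which requires $\chi_1\neq\chi_2$) is unavailable; and knowing that $H^1(I_N,\F_p)=\Hom(I_N,\F_p)$ factors through the tame quotient only controls first-order deformations, not pseudodeformations to an arbitrary ring in $\cC$. The paper fills this by hand: it writes down $\tilde\rho_N$ sending $\tau\mapsto\sm{1+x}{1}{x}{1}$ over $\Z_p\lb x\rb$ and proves $\tilde R_N\isoto\Z_p\lb x\rb$ by showing that two determinant-one pseudodeformations agreeing on $\tau$ agree on all powers of $\tau$ (via the pseudorepresentation identity), hence on $\overline{\dia{\tau}}$ by continuity, and both send $\ker(I_N\to I_N^{(p)})$ to $2$ because any Cayley--Hamilton representation realizing them is residually trivial, so has pro-$p$ image and factors through $I_N^{(p)}$. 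You would need an argument of this kind; it is the real content of the proof. Two smaller points to repair as well: the relation is a unit times $t\Psi(t)$, not $\Psi(t)$ (your step (i) says the latter, contradicting your previous paragraph, and $\Psi(t)$ alone would give the wrong rank); and the unit must live in $\Z_p\lb t\rb$, not merely in the eigenvalue overring $\Z_p\lb\lambda-1\rb$ where the factorization is visible --- your sandwich only needs $f\mid t\Psi(t)$ in $\Z_p\lb t\rb$, which follows from $t\Psi(t)\in f\,\Z_p\lb\lambda-1\rb\cap\Z_p\lb t\rb=f\,\Z_p\lb t\rb$, the torsion-freeness argument the paper uses for its injectivity step, but this bookkeeping has to be said.
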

\begin{proof}
Let $\tilde{R}_N$ be the quotient of $R_{\bar D|_{I_N}}$ by the ideal generated by $D^u(\sigma)-1$ for all $\sigma \in I_N$.  That is, $\tilde{R}_N$ is the universal deformation ring of the trivial 2-dimensional pseudorepresentation on $I_N$ having trivial determinant.  Let $\tilde{D}: I_N \to \tilde{R}_N$ be the universal deformation.  Consider the representation
\[
\tilde{\rho}_N: I_N \to \GL_2(\Z_p\lb x \rb)
\]
obtained as the composite
\[
I_N \onto I_N^{(p)} =  \dia{\tau} \xrightarrow{\tau \mapsto \sm{1+x}{1}{x}{1}} \GL_2(\Z_p\lb x \rb).
\]
The pseudorepresentation of $\tilde{\rho}_N$ defines a map
\[
\psi:\tilde{R}_N \to \Z_p \lb x \rb
\]
We claim that $\psi$ is an isomorphism, with inverse given by the map
\[
\phi:\Z_p \lb x \rb \to \tilde{R}_N, \ x \mapsto \Tr_{\tilde{D}}(\tau)-2.
\]
Since $\psi(\Tr_{\tilde{D}}(\tau))=\Tr(\tilde{\rho}_N(\tau))=x+2$,  the composition $\psi \circ \phi$ is the identity. On the other hand, the map
\[
\phi \circ \psi: \tilde{R}_N \to \tilde{R}_N
\]
defines a pseudorepresentation $\tilde{D}': I_N \to \tilde{R}_N$.  To see that $\phi \circ \psi$ is the identity, it is enough to show that $\tilde{D}'=\tilde{D}$.  Since they both have trivial determinant, it suffices to show $\Tr_{\tilde{D}'}=\Tr_{\tilde{D}}$. By construction, 
\[
\Tr_{\tilde{D}'}(\tau)=\Tr_{\tilde{D}}(\tau).
\]
Then, by the pseudorepresentation identity \cref{item: pseudorep ident}, this implies that for all $n$,
\[
\Tr_{\tilde{D}'}(\tau^n)=\Tr_{\tilde{D}}(\tau^n).
\]
Since $\Tr_{\tilde{D}'}$ and $\Tr_{\tilde{D}}$ are continuous and agree on a dense subgroup of $\dia{\tau}$, they are equal on $\dia{\tau}$ (see \cref{rem:continuity}).  Finally, both $\Tr_{\tilde{D}'}$ and $\Tr_{\tilde{D}}$ send every element $\sigma \in \ker(I_N \to I_N^{(p)})$ to $2$. Indeed, for any Cayley--Hamilton representation $\rho: I_N \to E^\times$ inducing either one,  since $\rho \equiv 1 \pmod{\m E}$, the image of $\rho$ is pro-$p$, so $\rho$ factors through $I_N^{(p)}$. Since $\Tr_{\tilde{D}'}$ and $\Tr_{\tilde{D}}$ agree on $\dia{\tau}$ and on $\ker(I_N \to I_N^{(p)})$,  the pseudorepresentation identity implies that they agree on $I_N$.

Now, since $R_N$ is the quotient of $\tilde{R}_N$ by the relation $\Tr_{\tilde{D}}(\tau) = \Tr_{\tilde{D}}(\tau^N)$,  the map $\psi$ induces an isomorphism $R_N \cong \Z_p \lb x \rb /(f(x))$, where $f(x)= \Tr(\tilde{\rho}_N(\tau)) - \Tr(\tilde{\rho}_N(\tau^N))$.  
To compute $f(x)$ more explicitly, it is convenient to pass to an overring of $\Z_p \lb x\rb$ that contains the eigenvalues of $\tilde{\rho}_N(\tau)$, which are the roots of the polynomial $\lambda^2-(2+x)\lambda+1$. 
 Over the ring 
\[
\frac{\Z_p\lb x \rb[\lambda]}{\left(\lambda^2-(2+x)\lambda+1\right)} =\frac{\Z_p\lb x \rb[\lambda]}{\left((\lambda-1)^2-x\lambda\right)} \cong \Z_p\lb \lambda-1 \rb, \ x \mapsto\frac{(\lambda-1)^2}{\lambda},
\]
the eigenvalues\footnote{In fact, 
$\tilde{\rho}_N(\tau)$ is conjugate to the matrix $\sm{\lambda}{1}{0}{\lambda^{-1}}$ in $\GL_2(\Z_p\lb \lambda-1 \rb)$.} of $\tilde{\rho}_N(\tau)$ are $\lambda$ and $\lambda^{-1}$, so $f(x)=\lambda+\lambda^{-1}-(\lambda^N+\lambda^{-N})$.  Since $\Z_p\lb \lambda -1 \rb / \Z_p \lb x \rb$ is a torsion-free $\Z_p \lb x \rb$-module,  it follows that $\left(f(x) \Z_p\lb \lambda-1 \rb \right) \cap \Z_p\lb x\rb = f(x) \Z_p \lb x \rb$,  so that the map
\[
\frac{\Z_p\lb x \rb}{(f(x))} \to \frac{\Z_p \lb \lambda-1 \rb}{(f(x))}
\]
induced by $x \mapsto \frac{(\lambda-1)^2}{\lambda}$ is injective.
This shows that  $R_N$ is isomorphic to the subring generated by $\frac{(\lambda-1)^2}{\lambda}$ in the ring $A$ defined by
\[
A=\frac{\Z_p\lb \lambda-1 \rb}{\left(\lambda+\lambda^{-1}-\lambda^N-\lambda^{-N}\right)}.
\]
This presentation of $A$ can be simplified by factoring:
\[
\lambda+\lambda^{-1}-\lambda^N-\lambda^{-N} = -\lambda^{-N}(\lambda^{N+1}-1)(\lambda^{N-1}-1).
\]
Noting that $-\lambda^{-N} \in \Z_p\lb \lambda-1\rb$ is a unit and that, since $p \nmid(N-1)$ and $p^r \mid\mid (N+1)$ the ratios 
\[
\frac{\lambda^{N-1}-1}{\lambda-1},  \frac{\lambda^{N+1}-1}{\lambda^{p^r}-1}\in \Z_p\lb \lambda-1\rb
\]
are units, the ring $A$ can be written as
\[
A=\frac{\Z_p\lb \lambda-1 \rb}{(\lambda-1)(\lambda^{p^r}-1)}.
\]
Note that there is a surjective homomorphism $A \to \Lambda$ given by $\lambda \mapsto [\delta]$, which gives the presentation $\Lambda \cong \Z_p \lb \lambda-1 \rb/(\lambda^{p^r}-1)$. In this presentation, $\Lambda^+$ is the subring generated by $[\delta]+[\delta^{-1}]-2 = \frac{(\lambda-1)^2}{\lambda}$. 
Factoring $\lambda^{p^r}-1$ into a product of irreducible polynomials yields an embedding
\[
A \hookrightarrow \frac{\Z_p[\lambda-1]}{(\lambda-1)^2} \oplus \bigoplus_{i=1}^r \Z_p[\zeta_{p^i}],
\]
with $\lambda$ mapping to $\zeta_{p^i}$ in the rightmost factors.  This map induces the map \eqref{eq:Lam+ fiber} on the quotient $\Lambda$ of $A$.
Hence there is a commutative diagram
\[
\xymatrix{
R_N \ar@{->>}[d] \ar@{^(->}[r] & A \ar@{->>}[d] \ar@{^(->}[r] & \frac{\Z_p[\lambda-1]}{(\lambda-1)^2} \oplus \bigoplus_{i=1}^r \Z_p[\zeta_{p^r}] \ar@{->>}[d] \\
\Lambda^+ \ar@{^(->}[r] & \Lambda \ar@{->>}[r] & \Z_p \oplus \bigoplus_{i=1}^r \Z_p[\zeta_{p^r}]
}
\]
in which the leftmost vertical arrow is the map induced on the subrings generated by $\frac{(\lambda-1)^2}{\lambda}$ in middle vertical arrow. To see that the map $R_N \onto \Lambda^+$ is injective, it is enough to show that the kernel of the rightmost vertical arrow has trivial intersection with the subring generated by $\frac{(\lambda-1)^2}{\lambda}$. Since the kernel of this arrow is contained in the $\frac{\Z_p[\lambda-1]}{(\lambda-1)^2}$ factor, and $\frac{(\lambda-1)^2}{\lambda}$ maps to zero in this factor, this is clear.
\end{proof}

\section{Proof of \cref{thm:main}}
\label{sec:proof}
Combining \cref{lem:R to T}, \cref{lem:SurjectiveToR}, and \cref{prop:R_N=Lam}, we find that there is a chain of surjective ring homomorphisms
\begin{equation}
\label{eq:chain}
\Lambda^+ \isoto R_N \onto R \onto \bT.
\end{equation}
Letting $\pi:  \Lambda^+ \onto \bT$ denote the composition of these maps, we have a commutative diagram
\begin{equation}
\label{eq:LamT diagram}
\xymatrix{
\Lambda^+ \ar@{->>}[r]^{\pi} \ar@{->>}[dr] & \bT \ar@{->>}[d]^{\pi_\bT} \\
& \Z_p
}
\end{equation}
as in the set up of Wiles's numerical criterion \cite[Appendix]{FLT}, as improved by Lenstra \cite{lenstra} (see also \cite{dSRS}).  Let $J = \ker(\pi)$ be the augmentation ideal in $\Lambda^+$ and $I = \ker(\pi_\bT)$ the Eisenstein ideal in $\bT$. It is well known and easy to see that $\Ann_\bT(I)$ is the kernel of the quotient $\bT \to \bT^0$ of $\bT$ that acts faithfully on cuspforms. 

\begin{theorem}
The surjective maps in \eqref{eq:chain} are all isomorphisms.
\end{theorem}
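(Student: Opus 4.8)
The plan is to deduce everything from Wiles's numerical criterion applied to the diagram \eqref{eq:LamT diagram}, with the sole arithmetic input coming from \cite{us}. Since \eqref{eq:chain} is a chain of surjections of finitely generated $\Z_p$-modules beginning with an isomorphism, it suffices to show that the composite $\pi\colon\Lambda^+\onto\bT$ is injective; the maps $R_N\onto R$ and $R\onto\bT$ are then forced to be isomorphisms as well. The ingredients for the criterion are all in place: $\bT$ is finite flat over $\Z_p$ by construction; $\Lambda^+$ is finite flat over $\Z_p$ and is a complete intersection, since \cref{lem:Lam+} presents it as $\Z_p[x]/(x\Psi(x))$; the augmentations $\Lambda^+\onto\Z_p$ (sending $x\mapsto 0$) and $\pi_\bT\colon\bT\onto\Z_p$ (evaluation on the Eisenstein series $E$) are compatible with $\pi$; and $\pi(J)=I$. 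Write $\eta_{\Lambda^+}=\mathrm{aug}(\Ann_{\Lambda^+}(J))\subseteq\Z_p$ and $\eta_\bT=\pi_\bT(\Ann_\bT(I))\subseteq\Z_p$ for the associated congruence ideals.

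First I would compute the left-hand congruence ideal. In $\Lambda^+\cong\Z_p[x]/(x\Psi(x))$ the augmentation ideal $J$ equals $(x)$ and $\Ann_{\Lambda^+}(J)$ is generated by the class of $\Psi(x)$, so $\eta_{\Lambda^+}=(\Psi(0))$; by \cref{lem:Lam+} this has $p$-adic valuation $r$, whence $\#(\Z_p/\eta_{\Lambda^+})=p^r$. Surjectivity of $\pi$ together with $\pi(J)=I$ gives the automatic containment $\eta_{\Lambda^+}\subseteq\eta_\bT$ (if $aJ=0$ then $\pi(a)I=0$, and the augmentations agree on $a$ and $\pi(a)$), so $\#(\Z_p/\eta_\bT)\le p^r$. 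The crux of the proof — and the step I expect to be the main obstacle, since it is where the real arithmetic lives — is the reverse inequality $\#(\Z_p/\eta_\bT)\ge p^r$. Here one uses that $\Ann_\bT(I)=\ker(\bT\to\bT^0)$, so that $\eta_\bT$ is the Eisenstein congruence number measuring the depth of the congruences between $E$ and the cuspforms of weight $2$ and level $\Gamma_0(N^2)$ with residual representation $\omega\oplus 1$; this congruence number has $p$-adic valuation $r$ by the results of \cite{us} (specifically, by the analysis of $E$ and its constant term that underlies \cite[Theorem~B]{us}).

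Granting $\#(\Z_p/\eta_\bT)\ge p^r$, we obtain $p^r\Z_p=\eta_{\Lambda^+}\subseteq\eta_\bT\subseteq p^r\Z_p$, hence $\eta_\bT=\eta_{\Lambda^+}$. Since $\Lambda^+$ is a complete intersection, finite flat over $\Z_p$, mapping onto the finite flat $\Z_p$-algebra $\bT$ compatibly with the augmentations, and the common congruence ideal is nonzero, the numerical criterion of \cite[Appendix]{FLT} (in the form refined by Lenstra \cite{lenstra}; see also \cite{dSRS}) yields that $\pi$ is an isomorphism — and, as a byproduct, that $\bT$ is a complete intersection over $\Z_p$. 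Composed with $\Lambda^+\isoto R_N$, this shows that every map in \eqref{eq:chain} is an isomorphism, completing the proof of \cref{thm:main}.
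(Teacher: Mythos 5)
Your proposal is correct and follows essentially the same route as the paper: Wiles's numerical criterion applied to the diagram \eqref{eq:LamT diagram}, with the $\Lambda^+$-side invariant computed from \cref{lem:Lam+} and the lower bound $p^r\Z_p \supseteq \eta_\bT$ coming from the depth-$p^r$ Eisenstein congruence of \cite{us} (the paper cites \cite[Corollary 2.6]{us}, i.e.\ that $E$ is a cuspform modulo $p^r$, so that $\pi_\bT$ factors through $\bT^0$ modulo $p^r$). The only cosmetic difference is that you compare the congruence ideals $\eta_{\Lambda^+}=(\Psi(0))$ and $\eta_\bT$ and invoke the criterion in its ``equal nonzero congruence ideals with complete-intersection source'' form, whereas the paper directly computes $\#J/J^2=p^r$ and applies \cite[Criterion I]{dSRS}; these are equivalent uses of the same criterion.
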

\begin{proof}
Let $\eta =p^t$ be a generator of the ideal $\pi(\Ann_\bT(I)) \subseteq \Z_p$.  By the numerical criterion \cite[Criterion I]{dSRS}, it is enough to show $\#J/J^2 \le \eta$ .  It follows immediately from \cref{lem:Lam+}
that $\#J/J^2=p^r$.  Since $\Ann_\bT(I)=\ker(\bT\to \bT^0)$,  to show that $\eta \ge p^r$,  it is enough to show that the composite map 
\[
\bT \xrightarrow{\pi_\bT} \Z_p \to \Z/p^r\Z
\]
factors through $\bT^0$. In other words, it is enough to show that the Eisenstein series $E$ is a cuspform modulo $p^r$. This follows from \cite[Corollary 2.6]{us}, completing the proof.
\end{proof}

\begin{remark}
Since the ring $\Lambda^+$ is monogenic,  there is an alternative argument that does not use the numerical criterion (but still using the fact that $p^r \mid \eta$).  For this, note that the surjection $\pi$ and \cref{lem:Lam+} imply that $\bT$ has a presentation $\bT \cong \Z_p[x]/(xF(x))$ where $F(x)$ is a monic divisor of $\Psi(x)$. Then $\eta$ can be interpreted as the constant term $F(0)$ (up to a $p$-adic unit).  But since $p^r \mid \eta$, this implies that $\Psi(0) \mid F(0)$. Since $F(x) \mid \Psi(x)$, this implies $F(x)=\Psi(x)$, as desired.
\end{remark}

This completes the proof of \cref{thm:main}.

\section{Complement: relation to Massey products}
\label{sec:massey}
We have proven an isomorphism $\Lambda^+ \isoto R$ by identifying $R$ with a local deformation ring. Since the local deformation ring is so explicit, this gives us complete understanding of $R$, and in particular, its rank. In \cite{WWE20},  another method for studying the rank of $R$ is introduced, using obstructions in Galois cohomology that come from Massey products.  One might hope to use \cref{thm:main} and reverse the arguments of \cite{WWE20} to obtain non-trivial arithmetic results about vanishing of Massey products. 
In this section, we indicate how our results are related to Massey products, and conclude that the Massey products involved are particularly simple and not arithmetically interesting.  (This explains why the rank is basically constant in our case, rather than varying in an arithmetically interesting way as in \cite{WWE20}.)  We do not give another complete proof of \cref{thm:main} (although there is little doubt that one could give a proof along these lines).  Instead we attempt to illustrate why \cref{thm:main} is reasonable from the point of view of \cite{WWE20}.

\subsection{The strategy for relating Massey products to ranks}
It follows from \cref{prop:structure of R} that the tangent space of $R/pR$ is one-dimensional and spanned by a tangent vector $D_{b,c,\phi}$. So, there is an isomorphism $R/pR \cong \F_p \lb \epsilon\rb/(\epsilon^d)$ for some $d>1$ (including possibly $d=\infty$ if $R/pR \cong \F_p \lb \epsilon\rb$). This $d$ is then the $\F_p$-dimension of $R/pR$, which is an upper bound on the rank of $R$. One can study $d$ one step at a time: $d>2$ if and only if the tangent vector $R \onto \F_p[\epsilon]/(\epsilon^2)$ given by $D_{b,c,\phi}$ lifts to a map $R \onto \F_p[\epsilon]/(\epsilon^3)$. 
If $d>2$, then $d>3$ if and only if the map $R \onto \F_p[\epsilon]/(\epsilon^3)$ lifts to a map $R \onto \F_p[\epsilon]/(\epsilon^4)$, and so on. 

Given this interpretation of the $\F_p$-dimension of $R/pR$ in terms of lifts of the tangent vector, we now sketch how this is related to the vanishing of certain elements, called Massey products, in Galois cohomology $H^2(\Q,-)$.  For this, consider the problem of lifting $R \onto \F_p[\epsilon]/(\epsilon^2)$ to a map $R \onto \F_p[\epsilon]/(\epsilon^3)$.   Recall that $D_{b,c,\phi}$ comes from the trace of a representation $\rho_{b,c,\phi}$ \eqref{eq:rho bc tangent} that can be written as
\[
\rho_{b,c,\phi} = \ttmat{\omega(1+\phi_1 \epsilon)}{b+b_1\epsilon}{\omega c \epsilon}{1+\phi'_1 \epsilon} : G_\Q \to \GL_2(\F_p[\epsilon]/(\epsilon^2)),
\]
where $\phi_1$ is a choice of $1$-cochain satisfying $-d\phi_1=b \cup c$, $\phi'=bc-\phi$, and $b_1$ is a cochain satisfying $-db_1=b \cup \phi_1+\phi_1' \cup b$. A map $R \onto \F_p[\epsilon]/(\epsilon^3)$ lifting $D_{b,c,\psi}$ might come from a deformation $\rho_2$ of the form
\[
\rho_2 = \ttmat{\omega(1+\phi_1 \epsilon+\phi_2 \epsilon^2)}{b+b_1\epsilon+b_2 \epsilon^2}{\omega (c \epsilon+c_2\epsilon^2)}{1+\phi'_1 \epsilon+\phi_2'\epsilon^2} : G_\Q \to \GL_2(\F_p[\epsilon]/(\epsilon^3)).
\]
Here the functions $\phi_2$, $b_2$, $c_2$, and $\phi_2'$ are $1$-cochains\footnote{With coefficients in $\F_p$, $\F_p(1)$, $\F_p(-1)$, and $\F_p$, respectively.} that, in order for $\rho_2$ to be a homomorphism, must satisfy conditions on their coboundaries. For instance, $\phi_2$ satisfies
\[
-d\phi_2 = \phi_1 \cup \phi_1 + b \cup c_2 + b_1 \cup c.
\]
The right-hand side of this equation is a 2-cocycle, and the equation expresses the fact that this 2-cocycle is a 2-coboundary (i.e.~it vanishes in cohomology). A similar thing is true for the equations governing $b_2$, $c_2$, and $\phi_2'$. Conversely, without knowing that $\rho_2$ exists, if one knew that the relevant $2$-cocycles were $2$-coboundaries, one could define $\rho_2$ using these equations. The cohomology classes of these $2$-cocycles are examples of Massey products. This shows that Massey products are obstructions: the representation $\rho_2$ exists if and only if the Massey products vanish.

\begin{remark}
\label{rem:sketch}
We have glossed over several points in this sketch. First,  it is not clear that a pseudodeformation of $D_{b,c,\phi}$ must come from a true representation like $\rho_2$. This can be remedied by instead looking for deformations in the universal Cayley--Hamilton algebra.  
The results of Section \ref{sec:reduce to local} imply that this Cayley--Hamilton algebra is a GMA with a particularly simple form, so the true representations considered above are not too different from the universal case.  The main caveat is that, just as one doesn't need the cochain $b_1$ in order to define $D_{b,c,\phi}$, one also doesn't need the cochain $b_2$ in order to define the pseudorepresentation associated to $\rho_2$.
Second, in order for the deformation $\rho_2$ to define a map $R \onto \F_p[\epsilon]/(\epsilon^3)$, the pseudorepresentation must satisfy the local conditions required in the definition of $R$. This can be resolved by working with Galois cohomology with restricted ramification $H^2(G_{\Q,Np},-)$, and working carefully with the finite-flat condition.
\end{remark}
\subsection{Computation of the relevant Massey products}
The relevant Massey products can be computed explicitly in this case. There are two main reasons for this. First, the restriction maps
\[
\mathrm{res}_N: H^2(G_{\Q,Np},\F_p(i)) \to H^2(\Q_N,\F_p(i))
\]
for $i=0,1,-1$ are injective. This is a kind of local-to-global principle to compute the whether or not the global Massey-product classes vanish, it is enough to consider their restriction to local cohomology at $N$. Second, (and this is the most crucial difference with \cite{WWE20}), the cohomology group
$H^1(\Q_N,\F_p(1))$ is one-dimensional. Since $N \equiv -1 \pmod{p}$, there is an isomorphism $\F_p(1) \cong \F_p(-1)$ of $G_{\Q_N}$ modules, so $\mathrm{res}_N(b)$ and $\mathrm{res}_N(c)$ are both nonzero classes in the same one-dimensional space $H^1(\Q_N,\F_p(1))$. Up to rescaling, we can assume that $\mathrm{res}_N(b)=\mathrm{res}_N(c)$.

We will now sketch an argument that uses Massey products to explain why $\dim_{\F_p}(R/pR) \le \frac{p+1}{2}$ when $N \not \equiv -1 \pmod{p^2}$.
From this point on, we work exclusively with cohomology of $G_{\Q_N}$ and drop the $\mathrm{res}_N$ from the notation, which is justified by the local-to-global principle. Since $b=c$,  a particularly simple cochain $\phi$ satisfying $-d\phi=b\cup c = b \cup b$ is $\phi=\frac{1}{2}b^2$. Similarly, to find a cochain $b_1$ such that 
\[
-d b_1 = b\cup \phi + \phi' \cup b = b \cup \frac{1}{2}b^2 + \frac{1}{2}b^2\cup b
\]
take $b_1 = \frac{1}{6}b^3$. To simplify notation,  for $n<p$, let $b\up{n}=\frac{1}{n!}b^n$. Then $\rho_{b,c,\phi}$ takes the simple form
\[
\rho_{b,c,\phi} = \ttmat{\omega(1+b\up{2}\epsilon)}{b+b\up{3}\epsilon}{\omega b \epsilon}{1+b\up{2}\epsilon}.
\]
To deform this, one can take
\[
\rho_2 = \ttmat{\omega(1+b\up{2}\epsilon+b\up{4}\epsilon^2)}{b+b\up{3}\epsilon+b\up{5}\epsilon^2}{\omega (b \epsilon+b\up{3}\epsilon^2)}{1+b\up{2}\epsilon+b\up{4}\epsilon^2}.
\]
The obvious pattern continues: if $2n+1<p$, then there is a deformation $\rho_n: G_{\Q_N} \to \GL_2(\F_p[\epsilon]/(\epsilon^{n+1}))$ defined by
\[
\rho_n = \ttmat{ \omega(1+b\up{2}\epsilon+\dots +b\up{2n}\epsilon^n)}{b+b\up{2}\epsilon + \dots + b\up{2n+1}\epsilon^n}{\omega(b\epsilon+ \dots+b\up{2n-1}\epsilon^n)}{1+b\up2\epsilon+\dots+b\up{2n}\epsilon^n}.
\] 
Just as the pseudorepresentation $D_{b,c,\phi}$ does not require the cochain $b_1$ to be defined, the pseudorepresentation associated to $\rho_n$ does not require $b\up{2n+1}$ to be defined.  On other words, the pseudorepresentation associated to $\rho_n$ can be defined as long as $2n<p$. This defines a pseudorepresentation $D_\frac{p-1}{2}:G_N \to \F_p[\epsilon]/(\epsilon^\frac{p+1}{2})$. 
The obstruction to deforming $D_\frac{p-1}{2}$ is the $2$-cocycle
\begin{equation}
\label{eq:b^p}
\sum_{i=1}^{p-1} b\up{i} \cup b\up{p-i}.
\end{equation}
This is the Massey $p$th-power $\dia{b}^p$ of $b$, defined by Kraines \cite[Definition 11]{kraines}\footnote{Kraines actually only considers trivial coefficients,  but the generalization to non-trivial coefficients is straightforward.}.  By a variant of \cite[Theorem 14]{kraines} for non-trivial coefficients,  $\dia{b}^p$ is equal to $\partial(b)$ where $\partial$ is the connecting map in the exact sequence
\[
0 \to \F_p(1) \to (\Z/p^2\Z)(\omega) \to \F_p(1) \to 0,
\]
where $(\Z/p^2\Z)(\omega)$ is unramified $G_{\Q_N}$-module where $\Frob_N$ acts by $\omega(N)=-1$.  If $N \not\equiv -1 \pmod{p^2}$,  then a simple calculation shows that $\partial(b)=b \cup x$ for a non-trivial class $x \in H^1(\Q_N,\F_p)$.  By Tate duality, this implies that $\partial(b) \neq 0$.  
In other words, the $2$-cocycle \eqref{eq:b^p} is not a coboundary, and this gives an obstruction to deforming $D_\frac{p-1}{2}$.  If $\dim_{\F_p}(R/pR)$ were greater than $\frac{p+1}{2}$, there would be a surjective homomorphism $R \onto \F_p[\epsilon]/(\epsilon^\frac{p+3}{2})$,  from which one could construct a deformation of $D_\frac{p-1}{2}$, a contradiction.

The inequality $\dim_{\F_p}R/pR \le \frac{p+1}{2}$ goes most of the way to proving \cref{cor:main}. Indeed,  it is not difficult to show that every cuspform $f$ satisfying \eqref{eq:eis con} must be supercuspidal at $N$.  Taking the trace of the supercuspidal representation shows that the coefficient ring of $f$ contains $\Z_p[\zeta_p+\zeta_p^{-1}]$, so that $\mathrm{rank}_{\Z_p}(\bT) \ge \frac{p+1}{2}$.  Then $\dim_{\F_p}R/pR \le \frac{p+1}{2}$ and the surjection $R \onto \bT$ imply that these containments and inequalities are all equalities.  Of course, to complete the above sketch, one would have to deal with the issues mentioned in \cref{rem:sketch}.

\bibliography{eisenstein_congruences}
\bibliographystyle{alpha}
\end{document}